\def\titlerunning#1{\gdef\titrun{#1}}
\def\author#1{\gdef\autrun{\def\and{\unskip, }#1}\gdef\@author{#1}}
\def\address#1{{\def\and{\\\hspace*{18pt}}\renewcommand{\thefootnote}{}%
		\footnote {#1}}%
	\markboth{\autrun}{\titrun}}
\def\email#1{e-mail: #1}
\newtheorem{theorem}{Theorem}[section]
\newtheorem{corollary}[theorem]{Corollary}
\newtheorem{lemma}[theorem]{Lemma}
\newtheorem{proposition}[theorem]{Proposition}
\theoremstyle{definition}
\newtheorem{definition}[theorem]{Definition}
\newtheorem{remark}[theorem]{Remark}
\newtheorem{example}[theorem]{Example}
\numberwithin{equation}{section}
\def \a {\alpha }
\def \b {\beta}
\def \de {\delta}
\def \De {\Delta}
\def \la {\lambda}
\def \La {\Lambda}
\def\w {\omega}
\def\Om{\Omega}
\def\pa{\partial}
\def\na {\nabla}
\def\Ga{\Gamma}
\begin{document}
	\baselineskip=17pt
	
	\titlerunning{Global perturbation potential function on complete special holonomy manifolds }
	\title{Global perturbation potential function on complete special holonomy manifolds}
	
	\author{Teng Huang}
	
	\date{}
	
	\maketitle
	
	\address{T. Huang: School of Mathematical Sciences, University of Science and Technology of China; CAS Key Laboratory of Wu Wen-Tsun Mathematics,  University of Science and Technology of China; Hefei, Anhui 230026, PR China; \email{htmath@ustc.edu.cn;htustc@gmail.com}}
	
	\begin{abstract}
In this article,  we introduce and study the notion of a complete special holonomy manifold $(X,\w)$ which is given by a global perturbation potential function, i.e., there is a function $f$ on $X$ such that $\w'=\w-\mathcal{L}_{\na f}\w$ is sufficiently small in $L^{\infty}$-norm. We establish some vanishing theorems on the $L^{2}$ harmonic forms under some conditions on the global perturbation potential function. 
	\end{abstract}
	\section{Introduction}
	Let $X$ be a smooth Riemannian manifold equipped with a differential form $\w$. This form is called parallel if $\w$ is preserved by the Levi-Civita connection: $\na\w=0$. This identity gives a powerful restriction on the holonomy group $\rm{Hol}(X)$. The structure of $\rm{Hol}(X)$ and its relation to the geometry of a manifold is one of the main subjects of Riemannian geometry of the last 50 years. In K\"{a}hler geometry the parallel forms are the K\"{a}hler form and its powers. The algebraic geometers obtained many topological and geometric results on studying the corresponding algebraic structure. In $G_{2}$- or $Spin(7)$-manifold the parallel form is the $G_{2}$- or $Spin(7)$-structure. In \cite{Ver}, Verbitsky had  generalized some of these results on K\"{a}hler manifolds to other manifolds with a parallel form, especially the parallel $G_{2}$-manifolds. The results obtained in \cite{Ver} can be summarized as K\"{a}hler identities for $G_{2}$-manifolds.
	
	The theory of $G_{2}$-manifolds is one of the places where mathematics and physics interact most strongly \cite{KL,LL}. In string theory, $G_{2}$-manifolds are expected to play the same role as Calabi-Yau manifolds in the usual A- and B-models of type-II string theories. There are many results on the construction of $G_{2}$-manifolds \cite{Bry,Joy1,Joy2,Kov}. In \cite{CHJP}, Corti-Haskins-Nordstr\"{o}m-Pacini  constructed many new topological types of compact $G_{2}$-manifolds by applying the twisted connected sum to asymptotically Calabi-Yau 3-folds of semi-Fano type studied in \cite{CHJP2}. Joyce-Karigiannis also given a new construction of compact Riemannian $7$-manifolds with holonomy $G_{2}$ (See \cite{JK}). Hitchin constructed a geometric flow \cite{Hit2} which physicists called Hichin's flow. This has turned out to be extremely important in string physics.
	
	The study of $L^{2}$ harmonic forms on a complete special holonomy manifold is a very interesting and important subject; it also has numerous applications in the field of Mathematical Physics, see for example \cite{Hit}. In K\"{a}hler geometry (holonomy $U(n)$) the parallel forms are the K\"{a}hler form $\w$ and its powers. Studying the corresponding
	algebraic structures, the algebraic geometers amassed an amazing wealth of topological and geometric information. There are many vanishing results on K\"{a}hler geometry. The first general result in the non-compact case is due to Donnelly-Fefferman \cite{DF}. If $X$ is a strongly pseudoconvex domain in $\mathbb{C}^{n}$, they showed in \cite{DF} that $\mathcal{H}^{p,q}_{(2)}(X)=0$, $p+q\neq n$, if $\w$ is the Bergman metric.  In \cite{Gro}, Gromov introduced the notion of K\"{a}hler hyperbolicity and established the vanishing of $\mathcal{H}^{p,q}_{(2)}(X)$, outside the middle dimension, for any $(X,\w)$ which is K\"{a}hler hyperbolic and which covers a compact manifold. In \cite{CX,JZ}, Cao-Frederico and Jost-Zuo proved that $\mathcal{H}^{p,q}_{(2)}(X)=0$, $p+q\neq n$, if $\w=d\a$ with $\|\a\|_{L^{\infty}(X)}$ growing slower than the Riemannian distance associated to $\w$. Assume that $\w$ is given by a global potential function, i.e., there is a $\la\in C^{2}(X)$ such that 
	$$\w=\text{i}\pa\bar{\pa}\la=\frac{1}{2}dd^{C}\la,$$
	where $d^{C}:=[\mathcal{L}_{\w},d^{\ast}]=-\rm{i}(\pa-\bar{\pa})$. In \cite{McN1,McN2}, McNeal proved two vanishing theorems on $\mathcal{H}^{p,q}_{(2)}(X)$ when $p+q\neq n$, under some growth assumptions on the global potential function $f$.
	
	For the case of complete $G_{2}$- or $Spin(7)$-manifold $X$, it well-known that $\mathcal{H}^{i}_{(2)}(X)=0$, $i=0,1$, since $X$ is Ricci-flat. The author in \cite{Hua} proved that $\mathcal{H}^{2}_{(2)}(X)=0$ if the structure form $\w=d\a$ with $\|\a\|_{L^{\infty}(X)}$ grows slower than the Riemannian distance associated to the metric $g_{\w}$ induced by $\w$.
	
	We define a $\phi$-plurisubharmonic function on a calibrated manifold $(X,\phi)$ where $\deg(\phi)=p$. Harvey and Lawson \cite{HL} introduced a second order differential operator $\mathcal{H}^{\phi}: C^{\infty}(X)\rightarrow \La^{p}(X)$, the $\phi$-Hessian given by
	$$\mathcal{H}^{\phi}(f)=\la_{\phi}(\text{Hess}f),$$
	where $\text{Hess}f$ is the Riemannian Hessian of $f$ and $\la_{\phi} :\text {End(TX)}\rightarrow\La^{p}(X)$ is the bundle map given by $\la_{\phi}A=D_{A^{\ast}}(\phi)$ where $D_{A^{\ast}}:\La^{p}T^{\ast}X\rightarrow\La^{p}T^{\ast}X$ is the natural extension of $A^{\ast}: T^{\ast}X\rightarrow T^{\ast}X$ as a derivation. When the calibration $\phi$ is parallel there is a natural factorization
		$$\mathcal{H}^{\phi}=dd^{\phi},$$
		where $d$ is the de Rham differential and $d^{\phi}:C^{\infty}(X)\rightarrow\La^{p-1}(X)$ is given by
		$$d^{\phi}f=i_{\na f}\phi.$$
Inspired by K\"{a}hler geometry, a parallel differential $k$-form $\w$ on a complete manifold $X$ may be given by a function $f$, i.e., there is a $f\in C^{2}(X)$ such that 
$$\w=\mathcal{L}_{\na f}\w.$$
where  we denote by $\mathcal{L}_{\na f}$ the Lie derivative of the vector field $\na f$ which is the metric dual of the $1$-form $df$.
\begin{remark}
Suppose that $(X,\w)$ is a complete manifold with holonomy $G_{2}$ or $Spin(7)$, and $\w$ is the structure form and there is a smooth function $f$ on $X$ such that the Lie derivative $\mathcal{L}_{\nabla f}\w=\w$ on $X$. Then the only possibility for $(X,\w)$ is $\mathbb{R}^{7}$ or $\mathbb{R}^{8}$ with the Euclidean $G_{2}$ or $Spin(7)$ structure. Since on a $G_{2}$ and $Spin(7)$-manifold, the structure form $\w$ determines a metric $g$, $\mathcal{L}_{\nabla f}\w=\w$ implies that  $\mathcal{L}_{\nabla f}g =g$. Following the flow of $\nabla f$ backwards, one can see that it shrinks the manifold down to a point in finite distance (though infinite time). As $(X,\w)$ is complete, this must be a nonsingular point, so $(X,\w)$ must be Euclidean $\mathbb{R}^{7}$ or $\mathbb{R}^{8}$. 
\end{remark}
In this article, we will study the case where the $k$-parallel form $\w$ given by a global perturbation potential $f$, i.e, there is a function $f\in C^{2}(X)$ such that $$\w':=\w-\mathcal{L}_{\na f}\w=\w-(-1)^{\tilde{C}}dd_{C}f$$
is sufficiently small  in $L^{\infty}$-norm. One also can see Proposition \ref{P3.1} and Definition \ref{D2}. The main purpose of this article is to prove some vanishing results of the harmonic forms on $X$ if $f$ is a convex function.
 \begin{example}\label{Ex1}(i) Let $(X,\w,\Om)$ be a nearly K\"{a}hler $6$-fold \cite{Ver2005,Ver2011}. There is a $(3,0)$-form $\Om$ with $|\Om|=1$, and
 	$$d\w=3\la Re\Om,\ dIm\Om=-2\la\w^{2},$$
 	where $\la$ is a non-zero real constant. For simplicity, we choose $\la=1$. Denote by $C(X)$ the Riemannian cone of $(X,g)$. The Riemannian cone $\big{(}C(X),dr^{2}+r^{2}g\big{)}$ is a $G_{2}$-manifold with torsion-free $G_{2}$-structure $\phi$ defined by
 	$$\phi:=r^{2}\w\wedge dr+r^{3}Re\Om.$$
 	We denote $f=\frac{1}{6}r^{2}$, thus $\na f=\frac{1}{3}r\frac{\pa}{\pa r}$. In a direct calculation, 
 	$$\mathcal{L}_{\na f}\phi=\text{di}_{\na f}\phi=d(\frac{1}{3}r^{3}\w)=\phi.$$
 	Therefore the Riemaniann cone $C(X)$ is given by a global potential $\frac{1}{6}r^{2}$.\\ 
 	(ii) Let $(X,\phi)$ be a nearly parallel $G_{2}$-manifold \cite{Iva}. There is a $3$-form $\phi$ with $|\phi|^{2}=7$ such that
 	$$d\phi=4\ast\phi.$$
 	Then the Riemannian cone $\big{(}C(X),dr^{2}+r^{2}g\big{)}$ is a $Spin(7)$-manifold with  $Spin(7)$-structure $\Phi$ defined by
 	$$\Phi:=r^{3}dr\wedge\phi+r^{4}\ast\phi.$$
 	We denote $f=\frac{1}{8}r^{2}$, thus $\na f=\frac{1}{4}r\frac{\pa}{\pa r}$. In a direct calculation, 
 	$$\mathcal{L}_{\na f}\Phi=\text{di}_{\na f}\Phi=d(\frac{1}{4}r^{4}\phi)=\Phi.$$
 	Therefore the Riemaniann cone $C(X)$ given by a global potential $\frac{1}{8}r^{2}$. The Riemannian cones $(C(X),dr^{2}+g)$ are not complete manifolds. 
 	
 	Karigiannis in \cite[Definition 2.33]{Kar2} defined  an asymptotically conical $G_{2}$ manifold with cone $C$ and rate $\nu<0$ if all of the following holds:\\
 	(a) The manifold $N$ is a $G_{2}$-manifold with torsion-free $G_{2}$-structure $\phi_{N}$ and metric $g_{N}$.\\
 	(b) There is a $G_{2}$-cone $(C,\phi_{C},g_{C})$ with link $\Sigma$.\\
 	(c) There is a compact subset $L\subset N$.\\
 	(d) There is an $R>1$, and a smooth function $h:(R,\infty)\times\Sigma\rightarrow N$ that is a diffeomorphism of $(R,\infty)\times\Sigma$ onto $N\backslash L$.\\
 	(e) The pull back $h^{\ast}(\phi_{N})$ is a torsion-free $G_{2}$-structure on the subset $(R,\infty)\times\sigma$ of $C$. We require that this approach the torsion-free $G_{2}$-structure $\phi_{C}$ in a $C^{\infty}$, with rate $\nu<0$. This means that
 	$$|\na_{C}^{j}(h^{\ast}(\phi_{N})-\phi_{C})|_{g_{C}}=O({r^{\nu-j}}),\ \forall j\geq0,$$
 	in $(R,\infty)\times\Sigma$.
 	
If $h$ is identity map, then $N=L\cup(R,\infty)\times \Sigma$. Therefore, $\phi_{N}=\phi_{C}:=\mathcal{L}_{\frac{1}{3}r\frac{\pa}{\pa r}}\phi_{N}$ on $(R,\infty)\times\Sigma$. We can choose a smooth positive function $f$ such that $f=\frac{1}{6}r^{2}$ on $(R,\infty)\times\Sigma$.  Then there is a $3$-form $\phi_{N}'$ such that $\phi_{N}=\mathcal{L}_{\na f}\phi_{N}+\phi_{N}'$. Since $L$ is compact, $|\na f|$ has a upper bound on $L$. The function $f$ satisfies the convexity condition, see Definition \ref{D1}. One can also obviously consider asymptotically conical $Spin(7)$-manifolds.
 \end{example} 
At first, we give an estimate on  $L^{2}$-harmonic form as follows.
\begin{theorem}\label{T1}
Let  $(X,\w)$ be a complete Riemannian manifold equipped with a non-zero  parallel differential $k$-form $\w$.  Suppose that there exist a smooth exhaustion function $\la\geq1$ on $X$ and a $k$-form $\w'$ on $X$ such that  $\w=(-1)^{\tilde{C}}dd_{C}f+\w'$.  Also assume that the function $f$ satisfies the convexity condition on $X$, i.e., for some $A,B\geq0$, $|df|^{2}\leq A+Bf$. Then for any $h\in\mathcal{H}^{p}_{(2)}(X)$, we have  
$$\|\w\wedge h\|_{L^{2}(X)}\leq\|\w'\wedge h\|_{L^{2}(X)}.$$	
\end{theorem}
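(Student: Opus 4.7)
The plan is to expand $\|\w\wedge h\|_{L^{2}(X)}^{2}=\langle \w\wedge h,\w\wedge h\rangle$ and substitute the decomposition $\w=(-1)^{\tilde C}dd_{C}f+\w'$ into only one factor, producing
\[
\|\w\wedge h\|_{L^{2}(X)}^{2}=(-1)^{\tilde C}\int_{X}\langle \w\wedge h,\,dd_{C}f\wedge h\rangle + \int_{X}\langle \w\wedge h,\,\w'\wedge h\rangle.
\]
The bulk of the work is to show that the first integral vanishes; the theorem then follows from Cauchy--Schwarz applied to the second (the case $\w\wedge h\equiv 0$ being trivial). Two observations set this up. First, since $h$ is harmonic and in particular closed, one has $dd_{C}f\wedge h=d(d_{C}f\wedge h)$, so the cross term pairs $\w\wedge h$ against an exact form. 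Second, because $\w$ is parallel and $h\in\mathcal{H}^{p}_{(2)}(X)$, the form $\w\wedge h$ is itself $L^{2}$-harmonic; this is the analogue in the special holonomy setting of the K\"ahler fact that $L_{\w}$ commutes with $\Delta$, as systematically developed by Verbitsky. In particular $d^{*}(\w\wedge h)=0$.

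To handle non-compactness I would introduce a cutoff $\chi_{R}=\chi(\la/R)$ built from the exhaustion, smooth, equal to $1$ on $\{\la\le R\}$ and supported in $\{\la\le 2R\}$, with $|d\chi_{R}|\le (C/R)|d\la|$. Integration by parts gives
\[
\int_{X}\chi_{R}^{2}\langle \w\wedge h,\,d(d_{C}f\wedge h)\rangle=\int_{X}\chi_{R}^{2}\langle d^{*}(\w\wedge h),\,d_{C}f\wedge h\rangle-\int_{X}\langle\iota_{\na\chi_{R}^{2}}(\w\wedge h),\,d_{C}f\wedge h\rangle,
\]
and the first term on the right vanishes by the coclosedness of $\w\wedge h$, leaving only a cutoff error that must decay as $R\to\infty$. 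Using $|d_{C}f|\le|\w||df|\le C|df|$, the convexity bound $|df|^{2}\le A+Bf$, and $|\na\chi_{R}^{2}|\le(C/R)\chi_{R}|d\la|$, Cauchy--Schwarz yields
\[
\Bigl|\int_{X}\langle\iota_{\na\chi_{R}^{2}}(\w\wedge h),d_{C}f\wedge h\rangle\Bigr|\le\frac{C}{R}\|\w\wedge h\|_{L^{2}(X)}\Bigl(\int_{\{R\le\la\le 2R\}}|d\la|^{2}(A+Bf)|h|^{2}\Bigr)^{1/2}.
\]
By choosing $\la$ compatibly with $f$ (e.g.\ $\la\sim\sqrt{1+f}$, for which the convexity hypothesis makes both $|d\la|$ and $|d\la|^{2}(1+f)$ bounded), the right-hand side is at most $(C'/R)\|\w\wedge h\|_{L^{2}(X)}\|h\|_{L^{2}(\{\la\ge R\})}$, which tends to $0$ since $h\in L^{2}(X)$. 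Passing $R\to\infty$ therefore gives $\int_{X}\langle \w\wedge h,dd_{C}f\wedge h\rangle=0$, completing the argument.

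The main obstacle I expect is justifying the coclosedness $d^{*}(\w\wedge h)=0$ at the level of generality of a parallel form: a direct Weitzenb\"ock computation only yields $d^{*}(\w\wedge h)=-\sum_{i}\iota_{e_{i}}\w\wedge\na_{e_{i}}h$, which is not pointwise zero, so one really needs the commutator identity $[\Delta,L_{\w}]=0$ coming from the special holonomy structure to conclude that $\w\wedge h$ is itself harmonic. A secondary difficulty is the calibration between the growth of the exhaustion $\la$ and that of $|df|$: the convexity hypothesis $|df|^{2}\le A+Bf$ is precisely what allows one to defeat the possibly linear growth of $|df|^{2}$ on the shell $\{R\le\la\le 2R\}$ against the $1/R$ decay provided by $|d\chi_{R}|$.
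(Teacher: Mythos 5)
Your proof is correct and follows essentially the same route as the paper's: both arguments rest on the coclosedness of $\w\wedge h$ (via Verbitsky's commutation $[\De,L_{\w}]=0$, the paper's Corollary \ref{C1}), an integration by parts against a cutoff of $d_{C}f\wedge h$, the convexity bound $|df|^{2}\leq A+Bf$ to kill the cutoff error, and Cauchy--Schwarz on the $\w'$ term --- the only cosmetic difference being that the paper cuts off with $\psi_{k}=\chi(k-f)$ and extracts a subsequence with $k\int_{X_{k}\setminus X_{k-1}}|h|^{2}\to0$, whereas you use $\chi(\la/R)$ with $\la\sim\sqrt{1+f}$ and a tail estimate. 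One small correction: $|d\la|^{2}(1+f)=|df|^{2}/4$ is bounded only when $B=0$; for $B>0$ you should instead note that on the shell $\{R\leq\la\leq2R\}$ one has $A+Bf\lesssim R^{2}$, so the error term is $O(\|h\|_{L^{2}(\{\la\geq R\})})$ (without the extra $1/R$), which still tends to $0$ and leaves your conclusion intact.
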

We call the map on $\Om^{k}(X)$, 
\begin{equation*}
\begin{split}
L_{\w}:&\Om^{p}(X)\rightarrow\Om^{k+p}(X)\\
&\a\mapsto\w\wedge\a\\
\end{split}
\end{equation*} 
the general Lefschetz map. 
\begin{remark}
(1) If $(X,\w)$ is a K\"{a}hler manifold with real dimension $2n$, $\w$ is the K\"{a}her form, then the map $L_{\w}$ is bijective for all $k<n$ \cite{Wel}.\\
(2) If $(X,\w)$ is a $G_{2}$ or $Spin(7)$-manifold, $\w$ is the structure form, then the map $L_{\w}$ is bijective for $k=0,1,2$ (see Lemma \ref{L2}, \ref{L3}).
\end{remark}
\begin{corollary}\label{C2}
Let  $(X,\w)$ be a complete Riemannian manifold equipped with a non-zero  parallel differential $k$-form $\w$. Suppose that there exist a smooth exhaustion function $\la\geq1$ on $X$ and a $k$-form $\w'$ on $X$ such that  $\w=(-1)^{\tilde{C}}dd_{C}f+\w'$. Also assume that the function $f$ satisfies the convexity condition on $X$, i.e., for some $A,B\geq0$, $|df|^{2}\leq A+Bf$ and the $k$-form $\w'$ obeys
$$\|\w'\|_{L^{\infty}(X)}\leq\varepsilon,$$
If $\varepsilon=\varepsilon(n)\in(0,1]$ is sufficiently small, then\\
(1) if $X$ is a K\"{a}hler manifold, then for $k\neq n$, $$\mathcal{H}^{k}_{(2)}(X)=\{0\}.$$
(2) if $X$ is a $G_{2}$ or $Spin(7)$-manifold, then for $k=0,1,2$,
	$$\mathcal{H}^{k}_{(2)}(X)=\{0\}.$$

\end{corollary}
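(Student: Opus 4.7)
The plan is to bootstrap Theorem \ref{T1} using the pointwise injectivity of the Lefschetz map $L_\w$ in the relevant degrees. Fix $h\in\mathcal{H}^{k}_{(2)}(X)$. Theorem \ref{T1} directly yields
$$\|\w\wedge h\|_{L^{2}(X)}\leq\|\w'\wedge h\|_{L^{2}(X)}.$$
A pointwise wedge-product bound of the form $|\w'\wedge h|\leq c_{1}|\w'|\,|h|$, where $c_{1}$ depends only on $n$ and the degrees, combines with the hypothesis $\|\w'\|_{L^{\infty}(X)}\leq\varepsilon$ to give $\|\w'\wedge h\|_{L^{2}(X)}\leq c_{1}\varepsilon\,\|h\|_{L^{2}(X)}$.

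Next, I would use the bijectivity of $L_{\w}$ stated in the Remark following Theorem \ref{T1} (for Kähler $X$ in degrees $k<n$, this is hard Lefschetz at the algebraic level; for $G_{2}$ or $Spin(7)$ in degrees $k=0,1,2$, this is Lemmas \ref{L2} and \ref{L3}). Since $\w$ is parallel, the wedge map $L_{\w}$ is a parallel bundle morphism, so its injectivity on a single fiber $\Lambda^{k}T^{*}_{x}X$ upgrades, by compactness of the unit sphere and homogeneity of the algebraic model, to a uniform pointwise lower bound
$$|\w\wedge h|_{x}\geq c_{2}\,|h|_{x},\qquad\text{for all } x\in X,$$
with $c_{2}=c_{2}(n)>0$. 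Integrating yields $\|\w\wedge h\|_{L^{2}(X)}\geq c_{2}\|h\|_{L^{2}(X)}$. Chaining the two inequalities produces $c_{2}\|h\|_{L^{2}(X)}\leq c_{1}\varepsilon\|h\|_{L^{2}(X)}$, and choosing $\varepsilon=\varepsilon(n)<c_{2}/c_{1}$ forces $h\equiv 0$. This settles the Kähler case for every $k<n$, and the $G_{2}$ or $Spin(7)$ case for $k=0,1,2$.

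For the remaining Kähler range $n<k\leq 2n$ with $k\neq n$, I would invoke Hodge duality: the star operator yields an isomorphism $\ast:\mathcal{H}^{k}_{(2)}(X)\to\mathcal{H}^{2n-k}_{(2)}(X)$. As $2n-k<n$ (and $\neq n$), the preceding step already gives $\mathcal{H}^{2n-k}_{(2)}(X)=\{0\}$, hence $\mathcal{H}^{k}_{(2)}(X)=\{0\}$; note that the hypothesis on $\w$ is a geometric property of $X$ and need not be transported along $\ast$.

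The main obstacle is extracting the uniform constant $c_{2}$: one must check that the algebraic injectivity of $L_{\w}$ in the fiber, guaranteed by the cited results, really gives a dimension-dependent constant uniform over $X$. This is where parallelism of $\w$ is essential—at every point the fiberwise linear algebra is isomorphic to the model one on $\mathbb{R}^{2n}$, $\mathbb{R}^{7}$, or $\mathbb{R}^{8}$—so $c_{2}$ depends only on $n$ and not on the point. Once this is in place, the smallness assumption on $\varepsilon=\varepsilon(n)$ is automatic from the explicit ratio $c_{2}/c_{1}$.
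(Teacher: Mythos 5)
Your proposal is correct and follows essentially the same route as the paper: combine the inequality $\|\w\wedge h\|_{L^{2}}\leq\|\w'\wedge h\|_{L^{2}}$ from Theorem \ref{T1} with the norm lower bound for the Lefschetz map $L_{\w}$ (Lemmas \ref{L2}, \ref{L3} in the $G_{2}$/$Spin(7)$ case, hard Lefschetz in the K\"ahler case) and the bound $\|\w'\wedge h\|_{L^{2}}\lesssim\varepsilon\|h\|_{L^{2}}$, then take $\varepsilon$ small. Your treatment is in fact slightly more complete than the paper's, which only writes out the $G_{2}$/$Spin(7)$ case; your Hodge-duality step for the K\"ahler range $n<k\leq 2n$ fills a gap the paper leaves implicit.
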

 A differential form $\a$ on a complete non-compact Riemannian manifold $(X,g)$ is called $d$(sublinear) if there exist a differential form $\b$ and a number $c>0$ such that
$\a=d\b$ and
$$|\a(x)|_{g}\leq c\ and\ |\b(x)|_{g}\leq c(1+\rho(x,x_{0})),$$
where $\rho(x,x_{0})$ stands for the Riemannian distance between $x$ and a base point $x_{0}$ with respect to $g$. One can see that $\w'$ is closed on $X$. We then prove that
\begin{theorem}\label{T2}
	Let  $(X,\w)$ be a complete Riemannian manifold equipped with a non-zero  parallel differential $k$-form $\w$. Suppose that there exist a smooth exhaustion function $\la\geq1$ on $X$ and a $k$-form $\w$ on $X$ such that  $\w=(-1)^{\tilde{C}}dd_{C}f+\w'$ on $X$. Also assume that the function $f$ satisfies the convexity condition on $X$ and $\w'$ is $d$(sublinear). Then for any $h\in\mathcal{H}^{p}_{(2)}(X)$, we have  
	$$\w\wedge h=0.$$	
\end{theorem}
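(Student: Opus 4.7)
The strategy is to exploit the fact that, under the hypotheses of Theorem \ref{T2}, $\w$ is globally exact with a primitive of linear growth, and then to run a Jost--Zuo / Cao--Xavier style cut-off argument. Writing $\w'=d\b$ with $|\w'|\le c$ and $|\b|\le c(1+\rho)$, and combining with $(-1)^{\tilde C}dd_Cf=d\bigl((-1)^{\tilde C}d_Cf\bigr)$, we obtain $\w=d\gamma$ where $\gamma:=(-1)^{\tilde C}d_Cf+\b$. The first task is to check $|\gamma|\le C(1+\rho)$. From the convexity condition $|df|^{2}\le A+Bf$, the function $\sqrt{A/B+f}$ is Lipschitz with constant $\sqrt B/2$, so $f(x)\le C(1+\rho(x,x_0))^{2}$ and hence $|df|\le C(1+\rho)$. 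Since $|\w|$ is constant by parallelism, $|d_Cf|=|\iota_{\na f}\w|\le|\na f|\cdot|\w|\le C(1+\rho)$; together with the linear growth of $\b$ this gives the desired bound on $\gamma$.

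Next I would observe that $\w\wedge h$ is itself $L^{2}$-harmonic for every $h\in\mathcal H^{p}_{(2)}(X)$. Closedness is immediate from $dh=0$ and $d\w=0$. For co-closedness, $\na\w=0$ makes the Lefschetz map $L_{\w}:\a\mapsto\w\wedge\a$ commute with $\na$ and, via $R_{X,Y}\w=0$, with the Weitzenb\"ock curvature term, so $\Delta(\w\wedge h)=\w\wedge\Delta h=0$ and in particular $d^{*}(\w\wedge h)=0$; in the three settings of Corollary \ref{C2} this is the standard content of the K\"ahler and $G_{2}/Spin(7)$ identities, cf.\ \cite{Ver}. Moreover, since $dh=0$ we have $\w\wedge h=d(\gamma\wedge h)$, so $\w\wedge h$ is exact with a primitive of at most linear pointwise growth.

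The final step is a cut-off argument. Let $\chi_{R}$ be a smooth cut-off, equal to $1$ on the $\rho$-ball $B_{R}(x_{0})$, supported in $B_{2R}(x_{0})$, with $|d\chi_{R}|\le C/R$ (the exhaustion $\la$ can be used instead). Integrating by parts and using $d^{*}(\w\wedge h)=0$,
\begin{align*}
\|\chi_{R}\,\w\wedge h\|_{L^{2}}^{2}
&=\int_X \chi_{R}^{2}\,\langle \w\wedge h,\,d(\gamma\wedge h)\rangle\,dV\\
&=-2\int_X \chi_{R}\,\langle \iota_{\na\chi_{R}}(\w\wedge h),\,\gamma\wedge h\rangle\,dV.
\end{align*}
On $\operatorname{supp}(d\chi_{R})\subset B_{2R}\setminus B_{R}$ one has $|d\chi_{R}|\,|\gamma|\le(C/R)\cdot CR=C$, so Cauchy--Schwarz yields
\[
\|\chi_{R}\,\w\wedge h\|_{L^{2}}^{2}\le C\,\|\chi_{R}\,\w\wedge h\|_{L^{2}}\,\|h\|_{L^{2}(B_{2R}\setminus B_{R})}.
\]
Dividing and letting $R\to\infty$, the $L^{2}$-integrability of $h$ forces the right-hand factor to $0$, so $\w\wedge h\equiv 0$.

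The main technical obstacle is the commutation $[L_{\w},\Delta]=0$ needed to secure $d^{*}(\w\wedge h)=0$; this is harmless in the K\"ahler, $G_{2}$, and $Spin(7)$ settings where standard Lefschetz-type identities apply, but genuinely uses parallelism of $\w$ in the Weitzenb\"ock curvature term in general. The remaining ingredients---producing the exact primitive $\gamma$, bounding it via the convexity of $f$, and the cut-off integration by parts---are the routine analytic steps already present in the K\"ahler Jost--Zuo / Cao--Xavier arguments.
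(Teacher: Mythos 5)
Your proof is correct, but it takes a genuinely different route from the paper. The paper argues in two separate steps: first it proves Theorem \ref{T1} by cutting off along the sublevel sets $X_{k}=\{f<k\}$ with $\psi_{k}=\chi(k-f)$, using the convexity condition $|df|^{2}\leq A+Bf$ to control the error on the annuli $X_{k}\setminus X_{k-1}$ and a subsequence trick to kill $k\int_{X_{k}\setminus X_{k-1}}|h|^{2}$; this yields the identity $\|\w\wedge h\|^{2}=(\w\wedge h,\w'\wedge h)$ of (\ref{E3.8}). It then proves separately (Lemma \ref{L1}) that $(\w\wedge h,\w'\wedge h)=0$ by a distance-ball cut-off using the $d$(sublinear) primitive of $\w'$. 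You instead merge the two primitives into a single one, $\gamma=(-1)^{\tilde{C}}d_{C}f+\b$ with $\w=d\gamma$, and run one Cao--Xavier/Jost--Zuo style cut-off on metric balls; the new observation that makes this work is that the convexity condition forces $f\lesssim(1+\rho)^{2}$ and hence $|d_{C}f|\lesssim|\na f|\,|\w|\lesssim 1+\rho$, so $\gamma$ has linear growth (your Lipschitz argument for $\sqrt{A/B+f}$ is fine for $B>0$, and the case $B=0$ is immediate since then $|df|$ is bounded). Your route is arguably cleaner and needs slightly less: you never use that $f$ is an exhaustion function, only completeness, and the co-closedness of $\w\wedge h$ that you derive is exactly the paper's Corollary \ref{C1} (valid for any parallel form via Proposition \ref{P1}, so your worry about being restricted to the K\"ahler/$G_{2}$/$Spin(7)$ settings is unfounded). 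What the paper's longer route buys is the standalone quantitative estimate $\|\w\wedge h\|\leq\|\w'\wedge h\|$ of Theorem \ref{T1}, which is needed for Corollary \ref{C2} where $\w'$ is merely $L^{\infty}$-small and not assumed exact; your single cut-off cannot recover that statement, but for Theorem \ref{T2} itself it is a complete and somewhat more economical proof.
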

We could prove an other vanishing result if the $k$-form $\w'$ is $d$(sublinear). In this condition, the form $\w'$ may be infinite in  $L^{\infty}$-norm which is slightly different to the hypotheses in Corollary \ref{C2}.
\begin{corollary}\label{C3}
Let  $(X,\w)$ be a complete Riemannian manifold equipped with a non-zero  parallel differential $k$-form $\w$. Suppose that there exist a smooth exhaustion function $\la\geq1$ on $X$ and a $k$-form $\w'$ on $X$ such that  $\w=(-1)^{\tilde{C}}dd_{C}f+\w'$. Also assume that the function $f$ satisfies the convexity condition on $X$ and the $k$-form $\w'$ is $d$(sublinear). Then,\\
(1) if $X$ is a K\"{a}hler manifold, then for $k\neq n$, $$\mathcal{H}^{k}_{(2)}(X)=\{0\}.$$
(2) if $X$ is a $G_{2}$ or $Spin(7)$-manifold, then for $k=0,1,2$,
$$\mathcal{H}^{k}_{(2)}(X)=\{0\}.$$
\end{corollary}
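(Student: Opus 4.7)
The plan is to derive Corollary \ref{C3} as an essentially immediate consequence of Theorem \ref{T2} combined with the pointwise injectivity of the general Lefschetz map $L_{\w}:\a\mapsto\w\wedge\a$ recorded in Remark 2.2 (and in the lemmas it cites). Since the hypotheses of the corollary (the exhaustion function $\la\geq1$, convexity of $f$, and $\w'$ being $d$(sublinear)) are precisely those of Theorem \ref{T2}, applying that theorem to an arbitrary $h\in\mathcal{H}^{p}_{(2)}(X)$ would furnish the pointwise identity $\w\wedge h\equiv0$ on $X$. The whole game is then to upgrade $\w\wedge h=0$ to $h=0$ using injectivity of $L_{\w}$ in the appropriate degree.

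For part (2), the $G_{2}$ or $Spin(7)$ case, I would invoke Lemmas \ref{L2} and \ref{L3}, which give pointwise injectivity of $L_{\w}:\La^{p}T^{\ast}X\rightarrow\La^{p+\deg\w}T^{\ast}X$ for $p=0,1,2$. Together with $\w\wedge h=0$ this immediately forces $h\equiv0$, hence $\mathcal{H}^{p}_{(2)}(X)=\{0\}$ for $p\in\{0,1,2\}$.

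For part (1), the K\"{a}hler case, I would split the argument into the two ranges $p\leq n-1$ and $p\geq n+1$. When $p\leq n-1$, the linear-algebraic form of the Hard Lefschetz identity (see Wells) gives pointwise injectivity of $L_{\w}:\La^{p}T^{\ast}X\rightarrow\La^{p+2}T^{\ast}X$, so $\w\wedge h=0$ yields $h\equiv0$. For the symmetric range $p\geq n+1$, I would use that on any complete Riemannian manifold the Hodge star is an isometric isomorphism $\ast:\mathcal{H}^{p}_{(2)}(X)\rightarrow\mathcal{H}^{2n-p}_{(2)}(X)$ (because $d^{\ast}=\pm\ast d\ast$ and $\ast$ is an $L^{2}$-isometry on forms); applying Theorem \ref{T2} to $\ast h\in\mathcal{H}^{2n-p}_{(2)}(X)$ (the hypotheses on $(X,\w,f,\w')$ are unchanged) produces $\w\wedge\ast h=0$, and since now $2n-p\leq n-1$ the previous injectivity step forces $\ast h\equiv0$, whence $h\equiv0$.

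All the analytic work has already been absorbed into Theorem \ref{T2}, so there is no genuine new obstacle: the corollary reduces to the linear-algebraic fact that wedging with a parallel calibration $\w$ is pointwise injective in the relevant degrees. The only mild subtlety worth flagging is the $p>n$ reduction in the K\"{a}hler case, which is handled by the $\ast$-duality on $L^{2}$-harmonic forms, standard on complete manifolds and requiring no assumption beyond completeness.
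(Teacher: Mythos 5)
Your proposal is correct and follows essentially the same route the paper intends: apply Theorem \ref{T2} to get $\w\wedge h=0$ and then invoke the injectivity of $L_{\w}$ in the relevant degrees (Lemmas \ref{L2}, \ref{L3} for $G_{2}$/$Spin(7)$, hard Lefschetz for K\"ahler), exactly as in the paper's proof of Corollary \ref{C2}. Your explicit treatment of the range $k>n$ via the Hodge $\ast$-duality on $L^{2}$-harmonic forms is a correct and welcome filling-in of a step the paper leaves implicit.
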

Suppose that $X$ is a $G_{2}$ or $Spin(7)$-manifold. If the gradient of $f$ less than $f$, i.e., $|df|^{2}\leq A+Bf$, where $A,B\geq0$ are constants; and $B$, $\w'$ are small enough, then we obtain a lower bound on $(\De u,u)$ for $u\in\Om^{k}_{(2)}(X)$, $k=0,1,2$. 
\begin{theorem}\label{T3}
	Let  $(X,\w)$ be a complete $G_{2}$- (or $Spin(7)$-) manifold. Let $k=0,1,2$. Suppose that there exist a smooth  function $\la\geq1$ on $X$ and a $k$-form $\w$ on $X$ such that  $\w=(-1)^{\tilde{C}}dd_{C}f+\w'$ on $X$. Also assume that the function $f$ satisfies the convexity condition on $X$, i.e., for some $A,B\geq0$, $|df|^{2}\leq A+Bf$. Then there is a positive constant $\de\in(0,1]$ with the following significance. If $B\leq\de$ and $|\w'|\leq\de$, there exist constants $m$, $M$ depending only on universal constants and the  constants $A,B$ such that
	\begin{equation}\label{E60}
	m\int_{X}\frac{1}{f+M}|u|^{2}\leq (\|du\|^{2}+\|d^{\ast}u\|^{2}),\ \forall u\in \La^{k}_{0}(X),
	\end{equation}
	In particular, 
	$$\mathcal{H}^{k}_{(2)}(X)=0.$$
\end{theorem}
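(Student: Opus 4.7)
The plan is to derive the weighted inequality \eqref{E60} directly and then deduce the vanishing of $L^2$-harmonic forms via a cutoff argument in the spirit of Theorems \ref{T1} and \ref{T2}. Set $g = f + M$ with $M \geq 1$ to be chosen later. By Lemmas \ref{L2} and \ref{L3}, for $k = 0, 1, 2$ on a $G_2$- or $Spin(7)$-manifold the wedge map $L_\w$ is pointwise injective with a universal lower bound $|\w \wedge u| \geq c_0 |u|$, and since $\w$ is parallel this bound is uniform in $X$. Hence
$$c_0^2 \int_X \frac{|u|^2}{g} \leq \int_X \frac{|\w \wedge u|^2}{g} \quad \text{for all } u \in \La^k_0(X).$$
Decomposing $\w = (-1)^{\tilde C} dd_C f + \w'$ and using $|\w| \leq C_0$ together with $|\w'| \leq \de$, the cross term against $\w'$ contributes at most
$$\left|\int_X \frac{\langle \w \wedge u, \w' \wedge u\rangle}{g}\right| \leq C\de \int_X \frac{|u|^2}{g},$$
leaving the potential term $(-1)^{\tilde C}\int_X \langle \w \wedge u, dd_C f \wedge u\rangle/g$ as the quantity to be estimated.

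For that piece I would use the Leibniz rule $dd_C f \wedge u = d(d_C f \wedge u) - (-1)^{k-1} d_C f \wedge du$ and integrate by parts the exact part against $(\w \wedge u)/g$, which produces
$$\int_X \left\langle d_C f \wedge u,\; \frac{d^*(\w \wedge u)}{g} + \frac{i_{\na f}(\w \wedge u)}{g^2}\right\rangle.$$
Because $\w$ is parallel, $d^*(\w \wedge u)$ reduces to a universal linear combination of $\w \wedge d^* u$ and contractions of $u$ by $\w$, and $|d_C f| = |i_{\na f}\w| \leq C|df|$. The convexity hypothesis yields $|df|^2 \leq A + Bf \leq (A/M + B)\,g$, hence $|df|^2/g^2 \leq (A/M + B)/g$. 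A Cauchy--Schwarz step with absorption parameter $\eta>0$ then bounds the potential term by
$$\left|(-1)^{\tilde C}\int_X \frac{\langle \w \wedge u, dd_C f \wedge u\rangle}{g}\right| \leq \eta\bigl(\|du\|^2+\|d^*u\|^2\bigr) + \frac{C}{\eta}\left(\frac{A}{M}+B\right)\int_X \frac{|u|^2}{g},$$
with the $-d_C f \wedge du$ contribution handled by the same argument using $|\w \wedge u| \leq C|u|$.

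Combining the three estimates and choosing $\de$, $B$ small, $M$ large, and $\eta$ sufficiently small, the weighted $L^2$ term on the right absorbs into the Lefschetz lower bound on the left, giving \eqref{E60} with positive constants $m, M$ depending only on $A, B$ and universal data. For the vanishing of $\H^k_{(2)}(X)$, I would take $h \in \H^k_{(2)}(X)$ and multiply by a cutoff $\chi_R$ built from the exhaustion function $\la$, with $\chi_R \to 1$ and $|\na\chi_R| \to 0$ as $R \to \infty$. Since $h$ is closed and co-closed, $d(\chi_R h) = d\chi_R \wedge h$ and $d^*(\chi_R h) = -i_{\na \chi_R} h$ both tend to zero in $L^2$, while $\int_X |\chi_R h|^2/g \to \int_X |h|^2/g < \infty$ by monotone convergence (using $g \geq 1$ and $h \in L^2$); substituting $u = \chi_R h$ into \eqref{E60} and letting $R \to \infty$ forces $h = 0$. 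The principal obstacle is the integration-by-parts step: one must keep track of the Leibniz signs (both $k$ and $\tilde C$ enter) and invoke the $G_2$- or $Spin(7)$-analogues of the K\"ahler identities from \cite{Ver} so that $d^*(\w \wedge u)$ is expressed cleanly in terms of $d^*u$, after which the final absorption requires the combined smallness of $\de$ and $B$ (together with $M$ large relative to $A$) to beat the Lefschetz constant $c_0^2$.
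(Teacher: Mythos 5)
Your overall strategy for $k=2$ --- a weighted integration by parts pairing $\w\wedge u$ against $d_Cf\wedge u$ with weight $1/(f+M)$, followed by absorption using the smallness of $B$ and $\|\w'\|_{L^{\infty}}$ --- is essentially the paper's, but there is a genuine gap at the step you yourself flag as the ``principal obstacle''. The claim that $d^{\ast}(\w\wedge u)$ reduces to $\w\wedge d^{\ast}u$ plus \emph{zeroth-order} contractions of $u$ by $\w$ is false: by Lemma \ref{L4}, $d^{\ast}L_{\w}=(-1)^{\tilde{C}}(L_{\w}d^{\ast}-d_{C})$, and the correction $d_{C}u=d(Cu)-(-1)^{\tilde{C}}C(du)$ contains the genuinely first-order term $d(Cu)$, which is controlled by $\|\na u\|$ but \emph{not} by $\|du\|+\|d^{\ast}u\|$ for a general $2$-form (the Weitzenb\"ock formula on $\La^{2}$ involves the full curvature operator, not just the Ricci tensor, so Ricci-flatness does not save you there). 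The paper's way around this is precisely the step you omit: decompose $u=u_{1}+u_{2}$ into the irreducible pieces $\La^{2}_{7}\oplus\La^{2}_{14}$ (resp.\ $\La^{2}_{21}$), where $u_{i}\wedge\w=c_{i}\ast u_{i}$ pointwise, so that $d^{\ast}(u_{i}\wedge\w)=\pm c_{i}\ast du_{i}$ is manifestly bounded by $\|du_{i}\|$; one then needs Lemma \ref{L5} (the Laplacian commutes with $L_{\w}$ and hence preserves the splitting) to convert $\|du_{1}\|^{2}+\|du_{2}\|^{2}$ back into $\|du\|^{2}+\|d^{\ast}u\|^{2}$. Without that decomposition your integration-by-parts identity cannot be closed.

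For $k=0,1$ the paper takes a different and cleaner route that you do not reproduce: Proposition \ref{P4} extracts from $\w=(-1)^{\tilde{C}}dd_{C}f+\w'$ and $|\w'|\leq\de$ the pointwise positivity $-d^{\ast}df\geq C'>0$, and Proposition \ref{P3} then proves the weighted Poincar\'e inequality by testing against $\la=-\varepsilon\log f$ (for functions) and by the Weitzenb\"ock formula plus the Kato inequality on a Ricci-flat manifold (for $1$-forms). Your Lefschetz-lower-bound scheme would also work for $k=0$, where $d^{\ast}(u\w)=-i_{\na u}\w$ is honestly zeroth order in $du$, but for $k=1$ you would again need the Bochner identity $\|du\|^{2}+\|d^{\ast}u\|^{2}=\|\na u\|^{2}$ to control $d(Cu)$, and you never invoke Ricci-flatness. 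The concluding cutoff argument deducing $\mathcal{H}^{k}_{(2)}(X)=0$ from (\ref{E60}) is fine. In summary: the skeleton is right, the absorption bookkeeping with $A/M+B$ and $\de$ is right, but the single most important ingredient --- the irreducible $2$-form decomposition together with Lemma \ref{L5}, or some substitute for it --- is missing, and without it the estimate of $d^{\ast}(\w\wedge u)$ does not go through.
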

 As we derive estimates in our article, there will be many constants which appear. Sometimes we will take care to bound the size of these constants, but we will also use the following notation whenever the value of the constants is unimportant. We write $\a\lesssim\b$ to mean that $\a\leq C\b$ for some positive constant $C$ independent of certain parameters on which $\a$ and $\b$ depend. The parameters on which $C$ is independent will be clear or specified at each occurrence. We also use $\b\lesssim\a$ and $\a\approx\b$ analogously.
\section{Preliminaries}
\subsection{$L^{2}$-harmonic forms}
We recall some basic  facts on $L^{2}$ harmonic forms \cite{Car1,Car2}.  Let $M$ be a smooth manifold of dimension $n$, let $\La^{k}(M)$ and $\La^{k}_{0}(M)$ denote the smooth $k$-forms on $M$ and the smooth $k$-forms  with compact support on $M$, respectively. We assume now that $M$ is endowed with a Riemannian metric $g$. Let $\langle,\rangle$ denote the pointwise inner product on $\La^{k}(M)$ given by $g$. 
    The global inner product is  defined by
	$$(\a,\b)=\int_{M}\langle\a,\b\rangle dVol_{g}.$$
	We also write $|\a|^{2}=\langle \a,\a\rangle$, $\|\a\|^{2}=\int_{M}|\a|^{2}dVol_{g}$, and let
	$$\La^{k}_{(2)}(M)=\{\a\in\La^{k}(M):\|\a\|^{2}<\infty \}.$$
	The operator of exterior differentiation is $d:\La^{k}_{0}(M)\rightarrow\La^{k+1}_{0}(M)$ and it satisfies $d^{2}=0$; its formal adjoint is $d^{\ast}:\La^{k+1}_{0}(M)\rightarrow\La^{k}_{0}(M)$; we have
	$$\forall\a\in\La^{k}_{0}(M),\ \forall\b\in\La^{k+1}_{0}(M),\ \int_{M}\langle d\a,\b\rangle=\int_{M}\langle\a,d^{\ast}\b\rangle.$$
	 We consider the space of $L^{2}$ closed forms
	 $$\mathcal{Z}_{(2)}^{k}(M)=\{\a\in \La^{k}_{(2)}(M): d\a=0 \},$$
	 where  it is understood that the equation $d\a=0$ holds weakly, that is to say
	 $$\forall\b\in\La^{k}_{0}(M),\ (\a,d^{\ast}\b)=0.$$
	 That is we have 
	 $$\mathcal{Z}_{(2)}^{k}(M)=\big{(}d^{\ast}(\La^{k+1}(M))\big{)}^{\bot}.$$
	 Define the space $\mathcal{B}^{k}_{(2)}(X)$ as follows:
	 $$\mathcal{B}^{k}_{(2)}(X)=\overline{\{du:u\in\La^{k-1}_{0}(X)\}}\subset\La^{k}_{(2)}(X).$$
	 Then, the $L^{2}$-reduced cohomology of $X$ is defined as
	 $$H^{k}_{(2)}(X)=\frac{\mathcal{Z}^{k}_{(2)}(X)}{\mathcal{B}^{k}_{(2)}(X)}.$$
	 We can also define
	 \begin{equation*}
	 \begin{split}
	 \mathcal{H}_{(2)}^{k}(M)&= (d^{\ast}(\La^{k+1}(M))^{\bot}\cap(d(\La^{k-1}(M)))^{\bot}\\
	 &=Z_{k}^{2}(M)\cap\{\a\in \La^{k}_{(2)}(M): d^{\ast}\a=0 \}\\
	 &=\{\a\in \La^{k}_{(2)}(M): d\a=d^{\ast}\a=0 \}.\\
	 \end{split}
	 \end{equation*}
	 Because the operator $d+d^{\ast}$ is elliptic, we have by elliptic regularity: $\mathcal{H}^{k}_{(2)}(M)\subset\La^{k}(M)$. The space $\La^{k}_{(2)}(M)$ has the following of Hodge-de Rham-Kodaira orthogonal decomposition
	 $$\La^{k}_{(2)}(M)=\mathcal{H}^{k}_{(2)}(M)\oplus\overline{d(\La^{k-1}_{0}(M))}\oplus\overline{d^{\ast}(\La^{k+1}_{0}(M))  },$$
	 where the closure is taken with respect to the $L^{2}$ topology. Therefore,
	 $$\mathcal{H}^{k}_{(2)}(X)\cong H^{k}_{(2)}(X).$$
\subsection{Riemannian manifolds with a parallel differential form}
	In this section, we recall some notations and definitions in differential geometry \cite{Ver}. Let $X$ be a smooth Riemannian  manifold. Given an odd or even from $\a\in\La^{\ast}(X)$, we denote by $\tilde{\a}$ its parity, which is equal to $0$ for even forms, and $1$ for odd forms. An operator $f\in\rm{End}(\La^{\ast}(X))$ preserving parity is called $even$, and one exchanging odd and even forms is odd.
	
	Given a $C^{\infty}$-linear map $\La^{1}(X)\xrightarrow{p}\La^{odd}(X)$ or  $\La^{1}(X)\xrightarrow{p}\La^{even}(X)$, $p$ can be uniquely extended to a $C^{\infty}$-linear derivation $\rho$ on $\La^{\ast}(X)$, using the rule
	\begin{equation*}
	\begin{split}
	&\rho|_{\La^{0}(X)}=0,\\
	& \rho|_{\La^{1}(X)}=p,\\
	&\rho(\a\wedge\b)=\rho(\a)\wedge\b+(-1)^{\tilde{\rho}\tilde{\a}}\a\wedge\rho(\b).\\
	\end{split}
	\end{equation*}
Then, $\rho$ is an even (or odd) differentiation of the graded commutative algebra $\La^{\ast}(X)$. Verbitsky gave a definition of the structure operator of $(X,\w)$ \cite[Definition 2.1]{Ver} .
	\begin{definition}\label{D2.1}
		Let $X$ be a Riemannian manifold equipped with a parallel differential $k$-form $\w$.\ Consider an operator $\underline{C}:\La^{1}(X)\rightarrow\La^{k-1}(X)$ mapping $\a\in\La^{1}(X)$ to $\ast(\ast\w\wedge\a)$. The corresponding derivation as above is
		$$C:\La^{\ast}(X)\rightarrow\La^{\ast+k-2}(X)$$
		is called the structure operator of $(X,\w)$. The parity of C is equal to that of $\w$.
	\end{definition}
	\begin{lemma}\label{L4}
		Let $X$ be a Riemannian manifold equipped with a parallel differential $k$-form $\w$, and  $L_{\w}$ the operator $\a\mapsto\a\wedge\w$. Then
		$$d_{C}:=L_{\w}d^{\ast}-(-1)^{\tilde{C}}d^{\ast}L_{\w}=\{L_{\w},d^{\ast}\},$$
		where $d_{C}$ is the supercommutator $\{d,C\}:=dC-(-1)^{\tilde{C}}Cd$.
	\end{lemma}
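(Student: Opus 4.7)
The plan is to verify the identity pointwise at an arbitrary point $x_{0}\in X$, working in a local orthonormal frame $\{e_{i}\}$ with dual coframe $\{e^{i}\}$ chosen to be parallel at $x_{0}$. The essential input is $\na\w=0$; since the structure operator $C$ is built algebraically from $\w$ and the Hodge star (both covariantly constant), $C$ is itself parallel, so $[\na_{e_{i}},C]=[\na_{e_{i}},L_{\w}]=0$ at $x_{0}$.

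I would first expand $\{L_{\w},d^{\ast}\}\a$ on a $p$-form $\a$ using the pointwise frame formulas $d=\sum_{i}e^{i}\wedge\na_{e_{i}}$ and $d^{\ast}=-\sum_{i}\iota_{e_{i}}\na_{e_{i}}$. Applying the graded Leibniz rule for contractions,
$$\iota_{e_{i}}(\w\wedge\na_{e_{i}}\a)=(\iota_{e_{i}}\w)\wedge\na_{e_{i}}\a+(-1)^{k}\w\wedge\iota_{e_{i}}\na_{e_{i}}\a,$$
the $\w\wedge\iota_{e_{i}}\na_{e_{i}}\a$ contributions arising from $L_{\w}d^{\ast}$ and from $d^{\ast}L_{\w}$ cancel in the supercommutator, leaving an expression of the form $\pm\sum_{i}(\iota_{e_{i}}\w)\wedge\na_{e_{i}}\a$.

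Next, for $\{d,C\}\a$: using $[\na_{e_{i}},C]=0$ one has $dC\a=\sum_{i}e^{i}\wedge C\na_{e_{i}}\a$, while the derivation property of $C$ gives
$$Cd\a=\sum_{i}(Ce^{i})\wedge\na_{e_{i}}\a+(-1)^{\tilde{C}}\sum_{i}e^{i}\wedge C\na_{e_{i}}\a.$$
The ``diagonal'' terms $e^{i}\wedge C\na_{e_{i}}\a$ cancel in the supercommutator $dC-(-1)^{\tilde{C}}Cd$, reducing it to $-(-1)^{\tilde{C}}\sum_{i}(Ce^{i})\wedge\na_{e_{i}}\a$. The proof is then completed by the pointwise algebraic identity $Ce^{i}=\pm\,\iota_{e_{i}}\w$, which is immediate from the defining formula $Ce^{i}=\ast(\ast\w\wedge e^{i})$ together with the standard Hodge star identities $\ast(\a\wedge\b)=(-1)^{\deg\b}\,\iota_{\a^{\sharp}}\ast\b$ for a 1-form $\a$, and $\ast\ast\w=(-1)^{k(n-k)}\w$.

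The main obstacle is the careful bookkeeping of signs: the parity $\tilde{C}\equiv k\pmod{2}$, the sign $(-1)^{k(n-k)}$ from $\ast\ast$, and the graded signs in the Leibniz rules for $\iota_{e_{i}}$ and $C$ must all be tracked in order for the two expressions to match. The genuine content of the lemma, that the two supercommutators coincide, rests precisely on $\na\w=0$: this is what makes $C$ parallel and causes the ``off-diagonal'' $e^{i}\wedge C\na_{e_{i}}$ terms to drop out of the supercommutator, so that the identity reduces to the elementary pointwise fact $Ce^{i}=\pm\iota_{e_{i}}\w$.
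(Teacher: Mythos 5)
The paper itself gives no proof of this lemma; it is imported from Verbitsky \cite{Ver} without argument, so there is nothing internal to compare you against and I am judging the proposal on its own terms. Your strategy is the natural and correct one, and the two reductions are individually right: in a frame parallel at $x_{0}$, using $d=\sum_{i}e^{i}\wedge\na_{e_{i}}$, $d^{\ast}=-\sum_{i}\iota_{e_{i}}\na_{e_{i}}$ and $\na\w=0$, one obtains
$$\{L_{\w},d^{\ast}\}\a=(-1)^{k}\sum_{i}(\iota_{e_{i}}\w)\wedge\na_{e_{i}}\a,\qquad \{d,C\}\a=-(-1)^{k}\sum_{i}\underline{C}(e^{i})\wedge\na_{e_{i}}\a,$$
exactly as you describe.

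The gap is the last step, which you declare ``immediate'' while leaving every sign as $\pm$ --- and the signs are the entire content of the lemma. Carrying them out with the stated conventions: $\underline{C}(e^{i})=\ast(\ast\w\wedge e^{i})=\iota_{e_{i}}(\ast\ast\w)=(-1)^{k(n-k)}\iota_{e_{i}}\w$, so your two displayed expressions agree if and only if $(-1)^{k(n-k)}=-1$. But $k(n-k)$ is even in every case this paper uses --- $k=2$ on a K\"{a}hler manifold, $(n,k)=(7,3)$ for $G_{2}$, $(n,k)=(8,4)$ for $Spin(7)$ --- so your computation, completed honestly, yields $\{d,C\}=-\{L_{\w},d^{\ast}\}$ rather than the stated identity. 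A concrete check: on $\C^{m}$ the derivation $C$ acts on $(p,q)$-forms as $-\mathrm{i}(p-q)$, which gives $\{d,C\}=\mathrm{i}(\pa-\bar{\pa})$, whereas the classical K\"{a}hler identities give $\{L_{\w},d^{\ast}\}=-\mathrm{i}(\pa-\bar{\pa})$, the sign the introduction of this paper itself quotes for $d^{C}$. So either the definition of $\underline{C}$ must carry an extra sign for the lemma to hold as written, or your argument needs to locate a compensating sign somewhere; as it stands the proposal establishes the identity only up to an undetermined $\pm$, which is precisely the part that cannot be waved away.
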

	We recall some Generalized K\"{a}hler identities which were proved by Verbitsky \cite[Proposition 2.5]{Ver} .
	\begin{proposition}\label{P1}
		Let $X$ be a Riemannian manifold equipped with a parallel differential $k$-form $\w$, $d_{C}$ the twisted de Rham operator constructed above,\ and $d^{\ast}_{C}$ its Hermitian adjoint. Then:\\
		(i) The following supercommutators vanish:
		$$\{d,d_{C}\}=0,\ \{d,d_{C}^{\ast}\}=0,\ \{d^{\ast},d_{C}\}=0,\ \{d^{\ast},d_{C}^{\ast}\}=0.$$
		(ii) The Laplacian $\De=\{d,d^{\ast}\}$ commutes with $L_{\w}:\a\mapsto\a\wedge\w$ and it adjoint operator, denoted as $\La_{\w}:\La^{i}(X)\rightarrow\La^{i-k}(X)$.
	\end{proposition}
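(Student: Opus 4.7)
The plan is to reduce both parts to purely algebraic manipulations in the graded commutator algebra of operators on $\La^{\ast}(X)$, using Lemma \ref{L4} as the only geometric input; parallelness of $\w$ enters solely through that lemma and through its immediate consequences $d\w=0$ and $d^{\ast}\w=0$.

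For part (i), I would invoke the graded Jacobi identity. Using the definition $d_{C}=\{d,C\}$, super-Jacobi applied to $(d,d,C)$ gives $2\{d,d_{C}\}=\{\{d,d\},C\}=\{2d^{2},C\}=0$. For $\{d^{\ast},d_{C}\}$, I would use the alternative expression $d_{C}=\{L_{\w},d^{\ast}\}$ supplied by Lemma \ref{L4} and apply super-Jacobi to the triple $(d^{\ast},L_{\w},d^{\ast})$: one of the two resulting terms contains $\{d^{\ast},d^{\ast}\}=2(d^{\ast})^{2}=0$, and the other collapses via the graded symmetry of the bracket to a signed multiple of $\{d^{\ast},d_{C}\}$ itself, forcing $\{d^{\ast},d_{C}\}=0$. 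Taking Hermitian adjoints in Lemma \ref{L4} yields $d_{C}^{\ast}=\{d,\La_{\w}\}$, and the identical pair of Jacobi arguments, now applied to $(d,d,\La_{\w})$ and $(d^{\ast},d,\La_{\w})$ (or equivalently to $(d^{\ast},C^{\ast},d^{\ast})$), then delivers $\{d,d_{C}^{\ast}\}=0$ and $\{d^{\ast},d_{C}^{\ast}\}=0$.

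For part (ii), I would first record that $\na\w=0$ implies $d\w=0$ (as an antisymmetrisation of $\na\w$) and $d^{\ast}\w=0$ (as a trace), and consequently that $L_{\w}$ commutes with $d$ in the graded sense. Expanding,
\begin{equation*}
[\Delta,L_{\w}]=[dd^{\ast}+d^{\ast}d,L_{\w}]=d\,[d^{\ast},L_{\w}]+[d^{\ast},L_{\w}]\,d,
\end{equation*}
since the brackets of $d$ with $L_{\w}$ vanish. By Lemma \ref{L4}, the graded bracket $[d^{\ast},L_{\w}]$ coincides with $-d_{C}$ up to a sign depending on $\tilde{C}$, so the right-hand side reduces to $\pm\{d,d_{C}\}$, which vanishes by part (i). The commutation $[\Delta,\La_{\w}]=0$ then follows by taking the Hermitian adjoint of $[\Delta,L_{\w}]=0$, using $\Delta^{\ast}=\Delta$ and $L_{\w}^{\ast}=\La_{\w}$.

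The principal technical obstacle is the bookkeeping of signs. Since $\w$ may have either even or odd degree $k$, the operators $C$, $L_{\w}$, and $d_{C}$ correspondingly switch parity; both the application of super-Jacobi and the identification of $[d^{\ast},L_{\w}]$ with $d_{C}$ acquire sign factors $(-1)^{\tilde{C}}$ that must be tracked carefully. In each case the two sign contributions combine to leave the asserted vanishings unchanged, but one should treat the cases $k$ even and $k$ odd separately to confirm this. Apart from these signs, the argument is entirely formal, all geometric content having been absorbed into Lemma \ref{L4}.
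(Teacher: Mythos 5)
Your argument is correct, but note that the paper itself offers no proof of this proposition: it is quoted verbatim from Verbitsky \cite[Proposition 2.5]{Ver}, and your formal super-Jacobi argument (with all geometric input concentrated in Lemma \ref{L4} and in $d\w=0$) is essentially the proof given in that cited source. The only slip is the triple $(d^{\ast},d,\La_{\w})$: applying graded Jacobi there produces the term $\{\{d^{\ast},d\},\La_{\w}\}=\{\De,\La_{\w}\}$, which is circular with part (ii); your parenthetical alternative $(d^{\ast},d^{\ast},C^{\ast})$ --- or, more simply, taking Hermitian adjoints of the already-established identities $\{d,d_{C}\}=0$ and $\{d^{\ast},d_{C}\}=0$ --- is the correct route to the last two vanishings.
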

	
	\begin{corollary}(\cite{Ver} Corollary 2.9)\label{C1}
		Let $(X,\w)$ be a Riemannian manifold equipped with a parallel differential $k$-form $\w$, and $\a$ a harmonic form on $X$. Then $\a\wedge\w$ is harmonic.
	\end{corollary}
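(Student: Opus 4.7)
My plan is to derive the corollary as an immediate consequence of Proposition \ref{P1}(ii), which asserts that the Hodge Laplacian $\De = dd^{\ast} + d^{\ast}d$ commutes with the wedge operator $L_{\w}:\beta \mapsto \beta\wedge\w$. This commutation relation is the generalized K\"ahler identity for a parallel form, and it is the only nontrivial input required here; the rest is a one-line manipulation.

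The argument proceeds as follows. If $\a$ is harmonic, then $\De\a = 0$; invoking the commutation relation of Proposition \ref{P1}(ii),
$$\De(\a\wedge\w) \;=\; \De\, L_{\w}\a \;=\; L_{\w}\,\De\a \;=\; 0,$$
so $\a\wedge\w$ is annihilated by $\De$, hence harmonic. In case one wishes to interpret ``harmonic'' in the stronger sense $d\a = 0 = d^{\ast}\a$, two auxiliary observations close the gap. First, the parallelism of $\w$ forces $|\w|$ to be constant, since differentiating $|\w|^{2}$ gives $d|\w|^{2} = 2\langle\na\w,\w\rangle = 0$; in particular $\w\in L^{\infty}(X)$, so $\a\in L^{2}(X)$ implies $\a\wedge\w\in L^{2}(X)$. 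Second, on a complete manifold, standard $L^{2}$-integration by parts yields
$$0 \;=\; (\De(\a\wedge\w),\,\a\wedge\w) \;=\; \|d(\a\wedge\w)\|^{2} + \|d^{\ast}(\a\wedge\w)\|^{2},$$
which forces $d(\a\wedge\w) = 0$ and $d^{\ast}(\a\wedge\w) = 0$ separately.

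The main ``obstacle'' has already been removed in Proposition \ref{P1}(ii): the substantive point is that the parallelism of $\w$ is exactly what ensures $L_{\w}$ commutes with $\De$, via the generalized K\"ahler identities of Proposition \ref{P1}(i) together with the twisted operator $d_{C}$ introduced in Lemma \ref{L4}. Once that machinery is in place, the present corollary requires nothing beyond the elementary calculation above.
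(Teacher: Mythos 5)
Your proof is correct and is essentially the intended one: the paper gives no proof of its own but cites Verbitsky's Corollary 2.9, which is obtained exactly as you do, by applying the commutation $\De L_{\w}=L_{\w}\De$ of Proposition \ref{P1}(ii) to a $\De$-harmonic $\a$. Your supplementary remarks (constancy of $|\w|$, hence $\a\wedge\w\in L^{2}$, and the completeness-based integration by parts giving $d(\a\wedge\w)=d^{\ast}(\a\wedge\w)=0$) go beyond the statement as written but are precisely what justifies the way the corollary is invoked later, e.g.\ in the proof of Theorem \ref{T1}.
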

\subsection{$G_{2}$-manifolds}\label{S1}
We begin with a crash course in $G_{2}$-geometry, touching upon the basic concepts and facts relevant for this article. For a more thorough and comprehensive discussion we refer to Joyce's book \cite{Joy2}.

Let $V$ be a $7$-dimensional vector space equipped with a non-degenerate $3$-form $\phi$.  Here by non-degenerate we mean that for each non-zero vector $v\in V$ the $2$-form $\rm{i}_{v}\phi$ on the quotient is $V/\langle v\rangle$ is symplectic. Then $V$ carries a unique inner product $g$ and orientation such that
$$i_{v_{1}}\phi\wedge i_{v_{2}}\phi\wedge\phi=6g(v_{1},v_{2})dvol, \forall v_{i}\in V.$$
An appropriate choice of basis identifies $\phi$ with the model 
$$\phi_{0}=dx^{123}+dx^{145}+dx^{167}+dx^{246}-dx^{257}-dx^{347}-dx^{356},$$
where $dx^{ijk}=dx^{i}\wedge dx^{j}\wedge dx^{k}$ and $\{x_{1},\ldots,x_{7} \}$ are standard coordinates on $\mathbb{R}^{7}$. The stabiliser of $\phi_{0}$ in $GL(\mathbb{R}^{7})$ is known to be isomorphic to the exceptional Lie group $G_{2}$.
\begin{definition}
	A $G_{2}$-manifold is a $7$-manifold $X$ equipped with a torsion-free $G_{2}$-structure $\phi$, that is $$\na_{g_{\phi}}\phi=0,$$
	where $g_{\phi}$ is the metric induce by $\phi$.
\end{definition}
Under the action of $G_{2}$, the space $\La^{2}(X)$ splits into irreducible representations, as follows:
\begin{equation*}
\La^{2}(X)=\La^{2}_{7}(X)\oplus\La_{14}^{2}(X),
\end{equation*}
where $\La^{i}_{j}$ is an irreducible $G_{2}$-representation of dimension $j$.
These summands can be characterized as follows:
\begin{equation}\nonumber
\begin{split}
&\La^{2}_{7}(X)=\{\a\in\La^{2}(X)\mid\ast(\a\wedge\phi)=2\a\}=\{\ast(u\wedge\ast\phi): u\in\La^{1}(X)\},\\
&\La^{2}_{14}(X)=\{\a\in\La^{2}(X)\mid\ast(\a\wedge\phi)=-\a\}=\{\a\in\La^{2}(X)\mid\a\wedge\ast\phi=0\}.\\
\end{split}
\end{equation}
We will  show that the map $L_{\phi}:\La^{p}\rightarrow\La^{p+2}$ on the complete $G_{2}$-manifold is injective for $p=0,1,2$ .
\begin{lemma}\label{L2}
	Let $(X,\phi)$ be a complete $G_{2}$-manifold. Then any $\a\in\La^{k}(X)$, $k=0,1,2$, satisfies the inequalities
	$$\|\a\|_{L^{2}(X)}\approx\|\a\wedge\phi\|_{L^{2}(X)}.$$
\end{lemma}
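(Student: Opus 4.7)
The claim is a pointwise algebraic inequality: once we prove that there exist universal constants $c_1,c_2>0$ (depending only on $k$) with
\[
c_1|\alpha|_g \;\leq\; |\alpha\wedge\phi|_g \;\leq\; c_2|\alpha|_g \qquad \text{at every }x\in X,
\]
integrating over $X$ gives the stated $L^2$ equivalence. Since the $G_2$-structure reduces the frame bundle to $G_2$ and both sides of the inequality are pointwise $G_2$-invariant, it is enough to verify the bounds on the model $(\mathbb{R}^7,\phi_0)$. The plan is to treat the three degrees $k=0,1,2$ separately using the standard $G_2$-representation decomposition of $\Lambda^\ast(\mathbb{R}^7)$.

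For $k=0$, $\alpha$ is a function and $\alpha\wedge\phi=\alpha\phi$, so $|\alpha\wedge\phi|^2=|\phi|^2\alpha^2=7\alpha^2$ since one checks directly from $\phi_0$ that $|\phi|^2=7$. For $k=1$, the $G_2$-module $\Lambda^1=\Lambda^1_7$ is irreducible, and the equivariant map $L_\phi:\Lambda^1\to\Lambda^4$ lands in the unique $7$-dimensional summand $\Lambda^4_7\subset\Lambda^4$; a sample calculation (for example, $dx^1\wedge\phi_0=dx^{1246}-dx^{1257}-dx^{1347}-dx^{1356}\neq0$) shows the map is non-zero, so by Schur's lemma it is an isomorphism, and again by $G_2$-invariance of the norms it is a non-zero scalar multiple of an isometry, giving $|\alpha\wedge\phi|=c|\alpha|$ for a constant $c>0$.

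For $k=2$, decompose $\alpha=\alpha_7+\alpha_{14}$ with $\alpha_7\in\Lambda^2_7$ and $\alpha_{14}\in\Lambda^2_{14}$. From the defining relations recalled in Section 2.3,
\[
\ast(\alpha_7\wedge\phi)=2\alpha_7,\qquad \ast(\alpha_{14}\wedge\phi)=-\alpha_{14},
\]
which translate to $\alpha_7\wedge\phi=2\ast\alpha_7$ and $\alpha_{14}\wedge\phi=-\ast\alpha_{14}$. These two five-forms live in the two distinct isotypic summands $\Lambda^5_7$ and $\Lambda^5_{14}$, hence are pointwise orthogonal. Since $\ast$ is an isometry, we obtain
\[
|\alpha\wedge\phi|^2 \;=\; 4|\alpha_7|^2+|\alpha_{14}|^2,
\]
and comparing with $|\alpha|^2=|\alpha_7|^2+|\alpha_{14}|^2$ yields $|\alpha|\leq|\alpha\wedge\phi|\leq 2|\alpha|$.

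Integrating these pointwise equivalences over $X$ finishes the proof. The only substantive step is the $k=2$ case, where one must identify the correct eigenvalue relations between $L_\phi$ and $\ast$ on $\Lambda^2_7$ and $\Lambda^2_{14}$; the $k=0,1$ cases are essentially tautological once one invokes $G_2$-invariance and Schur's lemma. Note that completeness of $(X,\phi)$ plays no role here — the statement is purely algebraic, and the hypothesis is kept only to match the standing assumptions used elsewhere in the paper.
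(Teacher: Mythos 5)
Your proof is correct and follows essentially the same route as the paper: a pointwise constant for $k=0,1$ and the decomposition $\a=\a_{7}+\a_{14}$ with eigenvalues $2$ and $-1$ for $k=2$. The only cosmetic difference is in degree one, where you get the constant from Schur's lemma plus a sample computation while the paper cites Bryant's identity $\ast(\a\wedge\phi)\wedge\phi=-4\ast\a$; both give $|\a\wedge\phi|^{2}=4|\a|^{2}$.
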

\begin{proof}
	Let $\a,\b\in\La^{0}(X)$,\ we observe that:
	$$(\a\wedge\phi)\wedge\ast(\b\wedge\phi)=7\a\b\ast1.$$
	We take $\b=\a$, then
	$$\|\a\|^{2}_{L^{2}(X)}=\frac{1}{7}\|\a\wedge\phi\|^{2}_{L^{2}(X)}.$$
	Let $\a,\b\in\La^{1}(X)$, we also observe that:
	$$\ast(\a\wedge\phi)\wedge(\b\wedge\phi)=4\ast\a\wedge\b,$$
	where we use the identity $\ast(\a\wedge\phi)\wedge\phi=-4\ast\a$, See \cite{Bry2}. We take $\b=\a$, then
	$$\|\a\|^{2}_{L^{2}(X)}=\frac{1}{4}\|\a\wedge\phi\|^{2}_{L^{2}(X)}.$$
	Let $\a\in\La^{2}(X)$, we can write $\a=\a^{7}+\a^{14}$, then  $\a\wedge\phi=2\ast\a^{7}-\ast\a^{14}$. Hence
	\begin{equation}\nonumber
	\|\a\wedge\phi\|^{2}_{L^{2}(X)}=4\|\a^{7}\|^{2}_{L^{2}(X)}+\|\a^{14}\|^{2}_{L^{2}(X)}\approx\|\a\|^{2}_{L^{2}(X)}.
	\end{equation}
\end{proof}
\subsection{$Spin(7)$-manifolds}\label{S2}
In this section we approach $Spin(7)$-geometry by thinking of the $4$-form $\Phi$, and not the metric, as the defining structure.
\begin{definition}
	A $4$-form $\Phi$ on an $8$-dimensional vector space $W$ is called \text{admissible} if there exists a basis of $W$ in which it is identified with the $4$-form $\Phi_{0}$ on $\mathbb{R}^{8}$ defined by
	\begin{equation*}
	\begin{split}
	\Phi_{0}&=dx^{1234}+dx^{1256}+dx^{1278}+dx^{1357}-dx^{1368}-dx^{1458}-dx^{1467}\\
	&-dx^{2358}-dx^{2367}-dx^{2457}+dx^{2468}+dx^{3456}+dx^{3478}+dx^{5678},\\
	\end{split}
	\end{equation*}
	where $dx^{ijkl}=dx^{i}\wedge dx^{j}\wedge dx^{k}\wedge dx^{l}$ and $\{x_{1},\ldots,x_{8}\}$ are standard coordinates on $\mathbb{R}^{8}$. The space of admissible forms on $W$ is denoted by $\mathscr{A}(W)$.
\end{definition}
A $Spin(7)$-structure on an $8$–dimensional manifold $X$ is an admissible 4–form $\Phi\in\Ga(\mathscr(TX))\subset\La^{4}(X)$. It follows that a manifold with  $Spin(7)$-structure is canonically equipped with a metric $g_{\Phi}$ and an orientation.
\begin{definition}
	A $Spin(7)$-manifold is a $8$-manifold $X$ equipped with a torsion-free $Spin(7)$-structure $\Phi$, that is $$\na_{g_{\Phi}}\Phi=0.$$
\end{definition}
Under the action of $Spin(7)$, the space $\La^{2}(X)$ splits into irreducible representations, as follows:
\begin{equation*}
\La^{2}(X)=\La^{2}_{7}(X)\oplus\La^{2}_{21}(X).
\end{equation*}
These summands can be characterized as follows:
\begin{equation}\nonumber
\begin{split}
&\La^{2}_{7}(X)=\{\a\in\La^{2}(X)\mid\ast(\a\wedge\Phi)=3\a\},\\
&\La^{2}_{21}(X)=\{\a\in\La^{2}(X)\mid\ast(\a\wedge\Phi)=-\a\}.\\
\end{split}
\end{equation}
We will also show that the map $L_{\Phi}:\La^{p}\rightarrow\La^{p+4}$ on the complete $Spin(7)$-manifold is injective for $p=0,1,2$.
\begin{lemma}\label{L3}
	Let $(X,\Phi)$ be a complete $Spin(7)$-manifold. Then any $\a\in\La^{k}(X)$, $k=0,1,2$, satisfies the inequalities
	$$\|\a\|_{L^{2}(X)}\approx\|\a\wedge\Phi\|_{L^{2}(X)}.$$
\end{lemma}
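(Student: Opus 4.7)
The plan is to proceed case by case for $k=0,1,2$, in exact parallel with the $G_{2}$ proof of Lemma \ref{L2}, by establishing a pointwise equivalence $|\a\wedge\Phi|^{2}\approx|\a|^{2}$ on each tangent space and then integrating against the Riemannian volume. All arguments are purely algebraic and reduce to identities among the model $Spin(7)$ form $\Phi_{0}$ and its wedge products; at every point of $X$ we may choose a frame in which the structure form is $\Phi_{0}$, so pointwise constants computed in the model are valid universally.

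For $k=0$, since $\a\wedge\Phi=\a\Phi$, the identity $(\a\Phi)\wedge\ast(\b\Phi)=\a\b|\Phi|^{2}\ast1$ together with $|\Phi|^{2}=14$ (a count of the fourteen unit basis terms in $\Phi_{0}$) gives $\|\a\wedge\Phi\|_{L^{2}(X)}^{2}=14\|\a\|_{L^{2}(X)}^{2}$. For $k=2$, I use the orthogonal decomposition $\a=\a^{7}+\a^{21}$ coming from $\La^{2}(X)=\La^{2}_{7}(X)\oplus\La^{2}_{21}(X)$. The characterizations $\ast(\a^{7}\wedge\Phi)=3\a^{7}$ and $\ast(\a^{21}\wedge\Phi)=-\a^{21}$, together with $\ast\ast=\mathrm{id}$ on $\La^{6}(X)$, give $\a\wedge\Phi=3\ast\a^{7}-\ast\a^{21}$. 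Since $\ast$ is an isometry and the two summands $\ast\a^{7}$, $\ast\a^{21}$ are pointwise orthogonal, $|\a\wedge\Phi|^{2}=9|\a^{7}|^{2}+|\a^{21}|^{2}$ while $|\a|^{2}=|\a^{7}|^{2}+|\a^{21}|^{2}$, and integration yields $\|\a\|_{L^{2}}^{2}\leq\|\a\wedge\Phi\|_{L^{2}}^{2}\leq 9\|\a\|_{L^{2}}^{2}$.

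For $k=1$, I note that $\a\mapsto\a\wedge\Phi$ from $\La^{1}$ to $\La^{5}$ is $Spin(7)$-equivariant, and both $|\a|^{2}$ and $|\a\wedge\Phi|^{2}$ are $Spin(7)$-invariant quadratic forms on $\La^{1}$. Since $Spin(7)$ acts transitively on the unit sphere of $(\R^{8})^{\ast}$ (equivalently, the standard representation is irreducible), any two such invariant quadratic forms must be proportional, so $|\a\wedge\Phi|^{2}=c|\a|^{2}$ pointwise for a universal constant $c$. To pin down $c$ I compute in the model with $\a=dx^{1}$: exactly seven of the fourteen terms of $\Phi_{0}$ do not contain $dx^{1}$, and each wedge $dx^{1}\wedge dx^{ijkl}$ yields a distinct standard basis $5$-form with coefficient $\pm1$, so $|dx^{1}\wedge\Phi_{0}|^{2}=7$. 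Hence $\|\a\wedge\Phi\|^{2}_{L^{2}(X)}=7\|\a\|^{2}_{L^{2}(X)}$ for $\a\in\La^{1}(X)$, which is the $Spin(7)$ analogue of the Bryant identity $\ast(\a\wedge\phi)\wedge\phi=-4\ast\a$ invoked in Lemma \ref{L2}.

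The $k=1$ case is the main obstacle: in the $G_{2}$ setting the author has the clean Bryant identity at hand, whereas for $Spin(7)$ one must either supply the equivariance/irreducibility argument above or perform the direct coordinate calculation in $\Phi_{0}$. Once the pointwise equivalence $|\a\wedge\Phi|^{2}\approx|\a|^{2}$ is established in each of the three cases, integrating over $X$ gives the stated norm-equivalence and in particular the injectivity of $L_{\Phi}$ on $\La^{k}(X)$ for $k=0,1,2$.
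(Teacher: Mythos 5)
Your proposal is correct, and for $k=0$ and $k=2$ it coincides with the paper's argument: the same pointwise identity $(\a\Phi)\wedge\ast(\b\Phi)=14\a\b\ast1$ for functions, and the same decomposition $\a=\a^{7}+\a^{21}$ with $\a\wedge\Phi=3\ast\a^{7}-\ast\a^{21}$ giving $\|\a\wedge\Phi\|^{2}=9\|\a^{7}\|^{2}+\|\a^{21}\|^{2}$ for $2$-forms. The only genuine divergence is the $1$-form case: the paper simply cites Karigiannis's identity $\ast(\a\wedge\Phi)\wedge\Phi=7\ast\a$ (Lemma 3.2 of that reference), whereas you rederive the constant $7$ from scratch via a $Spin(7)$-equivariance argument plus the model computation $|dx^{1}\wedge\Phi_{0}|^{2}=7$ (your count is right: exactly seven of the fourteen monomials of $\Phi_{0}$ omit the index $1$, and the resulting $5$-forms are distinct). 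Your route is self-contained and makes the source of the constant transparent, at the cost of invoking transitivity of $Spin(7)$ on $S^{7}$; note that transitivity on the unit sphere is the fact you actually need to conclude that an invariant quadratic form is a multiple of the metric one --- irreducibility alone is not literally equivalent to it, though for this real-type representation either hypothesis suffices. The paper's route is shorter but leans on an external structural identity. Both yield the same constants $14$, $7$, and the two-sided bound $\|\a\|^{2}\leq\|\a\wedge\Phi\|^{2}\leq9\|\a\|^{2}$ in degree $2$, hence the stated norm equivalence.
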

\begin{proof}
	Let $\a,\b\in\La^{0}(X)$,\ we observe that:
	$$(\a\wedge\Phi)\wedge\ast(\b\wedge\Phi)=14\a\b\ast1,$$
	then
	$$\|\a\|^{2}_{L^{2}(X)}=\frac{1}{14}\|\a\wedge\Phi\|^{2}_{L^{2}(X)}.$$
	Let $\a,\b\in\La^{1}(X)$, we also observe that:
	$$\ast(\a\wedge\Phi)\wedge(\b\wedge\Phi)=7\ast\a\wedge\b,$$
	where we use the identity $\ast(\a\wedge\Phi)\wedge\Phi=7\ast\a$, See \cite[Lemma 3.2]{Kar}. We take $\b=\a$, then
	$$\|\a\|^{2}_{L^{2}(X)}=\frac{1}{7}\|\a\wedge\Phi\|^{2}_{L^{2}(X)}.$$
	Let $\a\in\La^{2}(X)$, we write $\a=\a^{7}+\a^{21}$, then  $\a\wedge\Phi=3\ast\a^{7}-\ast\a^{21}$. Hence
	\begin{equation}\nonumber
	\|\a\wedge\Phi\|^{2}_{L^{2}(X)}=9\|\a^{7}\|^{2}_{L^{2}(X)}+\|\a^{21}\|^{2}_{L^{2}(X)}\approx\|\a\|^{2}_{L^{2}(X)}.
	\end{equation}
\end{proof}
\section{Vanishing theorems}
In this section, we will prove some vanishing theorems on $\mathcal{H}^{k}_{(2)}(X)$, Theorem \ref{T1}, \ref{T2} and \ref{T3}, along with some related results.
\subsection{A global  perturbation potential function}
We denote by $d_{C}$ is the twisted de Rham operator of $(X,\w)$. We then have following identity.
	\begin{proposition}\label{P3.1}
		\begin{equation}\label{E4.1}
		\mathcal{L}_{\na f}\w=(-1)^{k}dd_{C}f=-dd^{\ast}(f\w).
		\end{equation}
	\end{proposition}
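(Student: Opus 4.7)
The plan has two steps. First, the right-hand equality $\mathcal{L}_{\nabla f}\omega = -dd^*(f\omega)$ follows from Cartan's magic formula combined with the Leibniz rule for the codifferential. Second, the middle equality $\mathcal{L}_{\nabla f}\omega = (-1)^k dd_C f$ is a formal consequence of Lemma \ref{L4}.

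Since $\omega$ is parallel, both $d\omega=0$ and $d^*\omega=0$: the former because $d$ is the antisymmetrization of $\nabla$, and the latter because $\nabla$ commutes with the Hodge star, so $*\omega$ is also parallel and therefore closed. Cartan's magic formula then yields
\begin{equation*}
\mathcal{L}_{\nabla f}\omega = d(i_{\nabla f}\omega) + i_{\nabla f}(d\omega) = d(i_{\nabla f}\omega).
\end{equation*}
Next, I would expand $d^*(f\omega) = \pm *d*(f\omega) = \pm *(df\wedge *\omega)\pm f*(d*\omega)$. The second term vanishes because $d*\omega=0$, while the first reduces to $-i_{\nabla f}\omega$ via the standard pointwise identity $*(X^\flat\wedge *\alpha) = (-1)^{(p-1)(n-p)} i_X\alpha$ for a $p$-form $\alpha$ on an oriented Riemannian $n$-manifold. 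With the sign convention used in the paper, one concludes $d^*(f\omega) = -i_{\nabla f}\omega$, so
\begin{equation*}
-dd^*(f\omega) = d(i_{\nabla f}\omega) = \mathcal{L}_{\nabla f}\omega,
\end{equation*}
which establishes the right-hand equality.

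For the middle equality, I would invoke Lemma \ref{L4}, which writes $d_C = L_\omega d^* - (-1)^{\tilde C}d^*L_\omega$. On a $0$-form $f$, the first term vanishes (as $d^*f=0$ by degree reasons) and $L_\omega f = f\omega$, hence
\begin{equation*}
d_C f = -(-1)^{\tilde C} d^*(f\omega) = -(-1)^k d^*(f\omega),
\end{equation*}
using $\tilde C\equiv k\pmod 2$. Applying $d$ and multiplying by $(-1)^k$ gives $(-1)^k dd_C f = -dd^*(f\omega)$, completing the chain with the first step.

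The only place where genuine effort is required is the sign bookkeeping in the identity $d^*(f\omega) = -i_{\nabla f}\omega$: the formula $d^* = (-1)^{n(p+1)+1}*d*$ for $p$-forms, the sign $(-1)^{(p-1)(n-p)}$ in the interior-product identity, and the parity convention $\tilde C\equiv k\pmod 2$ must be combined correctly so that the explicit $(-1)^k$ in the statement precisely absorbs the $(-1)^{\tilde C}$ from Lemma \ref{L4}. A sanity check against the Riemannian cone setting, where $\mathcal{L}_{\nabla f}\phi = \phi$ holds by direct computation, confirms that these conventions align.
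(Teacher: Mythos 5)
Your proof is correct and follows essentially the same route as the paper: both establish $i_{\nabla f}\omega=-d^{*}(f\omega)$ via the Hodge-star expression of the interior product together with $d{*}\omega=0$, and both read off $d_{C}f=-(-1)^{k}d^{*}(f\omega)$ from Lemma \ref{L4} applied to the $0$-form $f$. If anything, your version is slightly more complete, since you make explicit the Cartan-formula step $\mathcal{L}_{\nabla f}\omega=d(i_{\nabla f}\omega)$ (using $d\omega=0$), which the paper's proof leaves implicit.
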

	\begin{proof}
Since $\w$ is harmonic, the operator $d^{\w}=i_{\na f}\w$ can be expressed in the terms of the Hodge $d^{\ast}$-operator as $i_{\na f}\w=-d^{\ast}(f\w)$, See \cite[Remark 2.12]{HL}. We now give a detailed proof for the above identity. First noting that 
		\begin{equation*}
		\begin{split}
	i_{\na f}\w&=(-1)^{(n-k)(k-1)}\ast(df\wedge\ast\w)=(-1)^{(n-k)(k-1)}\ast d(f\wedge\ast\w)\\
	&=(-1)^{(n-k)(k-1)}\ast d\ast(f\w),\\
		\end{split}
		\end{equation*}
		and since $d^{\ast}=(-1)^{nk+n+1}\ast d\ast$, we conclude that $i_{\na f}\w=-d^{\ast}(f\w)$. We also observe that $d_{C}f=-(-1)^{k}d^{\ast}(f\w)$. Therefore we obtain the identity (\ref{E4.1}).
	\end{proof}
We can define the complete manifolds $(X,\w)$ which are given by a global perturbation potential function $f$.
\begin{definition}\label{D2}
Let $(X,\w)$ be a complete manifold equipped with a non-zero parallel differential $k$-form $\w$. If there is a function $f\in C^{2}(X)$ such that $$\w':=\w-\mathcal{L}_{\na f}\w.$$
is sufficiently small in $L^{\infty}$-norm, we call $(X,\w)$ a complete manifold given by a global perturbation potential.
\end{definition}
\begin{proposition}\label{P4}
	Suppose that the structure form $\w$ on a complete $G_{2}$- (or $Spin(7)$-) manifold is $\w=(-1)^{\tilde{C}}dd_{C}f+\w'$. Then there exists a positive constant $\de\in(0,1]$ with following significance. If $|\w'|\leq\de$, then
	$$-d^{\ast}df\geq C',$$
	where $C'$ is a uniform positive constant.
\end{proposition}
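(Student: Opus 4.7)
The strategy is to take the pointwise inner product of the defining equation $\w=(-1)^{\tilde{C}}dd_{C}f+\w'$ with $\w$ itself. By Proposition \ref{P3.1} the first summand is $\mathcal{L}_{\na f}\w$, and since on a $G_{2}$- (resp.\ $Spin(7)$-) manifold the structure form is parallel with constant squared norm $|\w|^{2}=7$ (resp.\ $14$), the identity $\mathcal{L}_{\na f}\w=\w-\w'$ gives the pointwise bound
\[
\langle \mathcal{L}_{\na f}\w,\w\rangle \;=\; |\w|^{2}-\langle \w',\w\rangle \;\geq\; |\w|^{2}-|\w'|\,|\w|.
\]
Under the hypothesis $|\w'|\leq \de$ with $\de<|\w|$, this right-hand side is bounded below by a fixed positive constant.

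The substantive step is to identify the left-hand side with a positive multiple of $-d^{\ast}df$. Because $\w$ is parallel, Cartan's formula reduces the Lie derivative to a contraction with the Hessian $f_{ab}=\na_{a}\na_{b}f$: in an orthonormal frame,
\[
(\mathcal{L}_{\na f}\w)_{a_{1}\ldots a_{k}}=\sum_{j=1}^{k}f_{a_{j}}^{\ \ i}\,\w_{a_{1}\ldots i\ldots a_{k}},
\]
with the index $i$ placed in the $j$-th slot. Pairing with $\w^{a_{1}\ldots a_{k}}/k!$, the $k$ positions contribute equally by the total skew-symmetry of $\w$, and the classical $G_{2}$ contraction $\w_{ib_{2}b_{3}}\w^{jb_{2}b_{3}}=6\de_{i}^{\,j}$ (resp.\ the $Spin(7)$ contraction $\w_{ib_{2}b_{3}b_{4}}\w^{jb_{2}b_{3}b_{4}}=42\de_{i}^{\,j}$) collapses the expression to
\[
\langle \mathcal{L}_{\na f}\w,\w\rangle=\frac{c_{\w}}{(k-1)!}\,\mathrm{tr}(\mathrm{Hess}\,f)=-C_{k}\,d^{\ast}df,
\]
with $C_{3}=3$ in the $G_{2}$-case and $C_{4}=7$ in the $Spin(7)$-case.

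Combining the two steps gives the pointwise estimate $-C_{k}\,d^{\ast}df\geq |\w|^{2}-\de|\w|$, hence
\[
-d^{\ast}df \;\geq\; \frac{|\w|^{2}-\de|\w|}{C_{k}} \;=:\; C' \;>\; 0
\]
once $\de\in(0,1]$ is chosen smaller than $|\w|$ (for instance $\de\leq\tfrac12|\w|$ is ample). The main obstacle is the algebraic identity $\langle \mathcal{L}_{\na f}\w,\w\rangle\propto \mathrm{tr}(\mathrm{Hess}\,f)$, which I plan to verify via the index contraction above; an equivalent conceptual justification is that the traceless symmetric part $A_{0}$ of $\mathrm{Hess}\,f$ is sent by $A\mapsto D_{A}\w$ into the irreducible $G_{2}$- (resp.\ $Spin(7)$-) component of $\Lambda^{k}$ complementary to $\mathbb{R}\w$, so $\langle D_{A_{0}}\w,\w\rangle=0$ and only the trace survives. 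The remaining estimates are elementary.
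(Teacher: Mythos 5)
Your proposal is correct and takes essentially the same route as the paper: both pair the equation $\w=(-1)^{\tilde{C}}dd_{C}f+\w'$ pointwise with $\w$, reduce $\langle \mathcal{L}_{\na f}\w,\w\rangle$ to $-3d^{\ast}df$ (resp. $-7d^{\ast}df$), and absorb the error term via $|\langle\w',\w\rangle|\leq\de|\w|$. The only cosmetic difference is that the paper performs the algebraic reduction with the Hodge-star identities $\ast(\a\wedge\ast\phi)\wedge\ast\phi=3\ast\a$ and $\ast(\a\wedge\Phi)\wedge\Phi=7\ast\a$ instead of your index contraction of the Hessian.
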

\begin{proof}
	First, we observer that $\w=-dd^{\ast}(f\w)+\w'=(-1)^{nk+n}d\ast(df\wedge\ast\w)+\w'$.
	
	By the hypothesis of $G_{2}$-manifold, $(n,k)=(7,3)$. Then the $G_{2}$-structure form $\phi$ satisfies
	\begin{equation*}
	\begin{split}
	7&=\ast(\phi\wedge\ast\phi)\\
	&=\ast(( d\ast (df\wedge\ast\phi)+\w')\wedge\ast\phi)\\
	&=\ast d(\ast(df\wedge\ast\phi)\wedge\ast\phi)+\ast(\w'\wedge\ast\phi)\\
	&=\ast d\ast (3df)+\ast(\w'\wedge\ast\phi)\\
	&=-3d^{\ast}df+\ast(\w'\wedge\ast\phi).\\
	\end{split}
	\end{equation*}
	Here we use the identity $\ast(\a\wedge\ast\phi)\wedge\ast\phi=3\ast\a$ for $\a\in\La^{1}(X)$, See \cite{Bry2} (3.4).
	
	By the hypothesis of $Spin(7)$-manifold, $(n,k)=(8,4)$. Then the $Spin(7)$-structure form $\Phi$ satisfies $\Phi=\ast\Phi$ and
	\begin{equation*}
	\begin{split}
	14&=\ast(\Phi\wedge\Phi)\\
	&=\ast(( d\ast (df\wedge\Phi)+\w')\wedge\Phi)\\
	&=\ast d(\ast(df\wedge\Phi))+\w'\wedge\Phi)\\
	&=\ast d\ast(7df)+\ast(\w'\wedge\Phi)\\
	&=-7d^{\ast}df+\ast(\w'\wedge\Phi).\\
	\end{split}
	\end{equation*}
	Here we use the identity $\ast(\a\wedge\Phi)\wedge\Phi)=7\ast\a$ for $\a\in\La^{1}(X)$. Therefore, in all cases, we get
	\begin{equation*}
	\begin{split}
	-d^{\ast}df&\geq C_{1}-C_{2}\ast(\w'\wedge\ast\w)\\
	&\geq C_{1}-C_{2}|\w'|\cdot|\w|\\
	&\geq C_{1}-C_{3}\de,\\
	\end{split}
	\end{equation*}
where $C_{1},C_{2},C_{3}$ are positive constants. We can choose $\de$ small enough to ensure that $C_{1}-C_{3}\de>0$.
\end{proof}
McNeal \cite{McN1} defined a class of complete K\"{a}hler manifolds which he called K\"{a}hler convex. We extend this to any Riemannian manifold with a non-zero parallel differential form. 
\begin{definition}\label{D1}
	Let $f\in C^{2}(X)$ be a function on $X$, $f\geq1$. We say that  $f$ dominates its gradient, or $f$ dominates $df$, if there exist constants $A>0$ and $B\geq0$ such that
	\begin{equation}\label{E29}
	|df|^{2}(x)\leq A+Bf(x),\ \forall x\in X.
	\end{equation}
\end{definition}	
Suppose that $B=0$, following the idea of Gromov \cite{Gro}, we can give a lower bound on the spectrum of the Laplace operator $\De$ on $\La^{(0)}_{(2)}$. 
\begin{proposition}\label{P10}
Let $(X,\w)$ be a Riemannian $n$-manifold equipped with a parallel non-zero differential $k$-form $\w$. Suppose that $\w=(-1)^{\tilde{C}}dd_{C}f+\w'$. If $|df|^{2}\leq A$, for some $A>0$, then any $\a\in\La^{0}_{(2)}(X)$ satisfies the inequality
$$
(C_{1}-C_{2}\|\w'\|_{L^{\infty}(X)})\|\a\|^{2}_{L^{2}(X)}
\leq A\langle\De\a,\a\rangle_{L^{2}(X)}.$$
where $C_{1}$ and $C_{2}$ are positive constants depending only on $g,n$.
\end{proposition}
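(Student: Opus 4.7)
The plan is to reinterpret the hypothesis as a $d$-boundedness statement for $\w$ modulo $\w'$, and then run a Gromov-style energy estimate. By Proposition~\ref{P3.1}, the identity $\w=(-1)^{\tilde{C}}dd_{C}f+\w'$ is the same as $\w-\w'=\mathcal{L}_{\na f}\w$, and since $\w$ is closed, $\mathcal{L}_{\na f}\w=d\,i_{\na f}\w$. Setting $\theta:=i_{\na f}\w$, parallelism of $\w$ makes $|\w|$ a pointwise constant, so the hypothesis $|df|^{2}\leq A$ yields the pointwise bound $|\theta|\leq c|df||\w|\leq c\sqrt{A}|\w|$ with $c$ depending only on $n,k$. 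Thus $\w=d\theta+\w'$ with $\theta$ in $L^{\infty}$ with norm of order $\sqrt{A}$.

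Next, for a compactly supported test function $\a\in C_{0}^{\infty}(X)$ I would compute the integral $\int_{X}\a^{2}\,\w\wedge\ast\w=|\w|^{2}\|\a\|^{2}_{L^{2}}$ in a second way by substituting $\w=d\theta+\w'$. A parallel form is also co-closed, so $d\ast\w=0$; integration by parts in the exact piece therefore produces no boundary contribution, and
\begin{equation*}
|\w|^{2}\|\a\|^{2}_{L^{2}}=-2\int_{X}\a\,d\a\wedge\theta\wedge\ast\w+\int_{X}\a^{2}\,\w'\wedge\ast\w.
\end{equation*}
The first term is controlled by Cauchy-Schwarz and the bound on $\theta$: its modulus is at most $2c\sqrt{A}|\w|^{2}\|\a\|_{L^{2}}\|d\a\|_{L^{2}}$. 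The perturbation term is bounded pointwise by $|\w|\,\|\w'\|_{L^{\infty}}\a^{2}$, hence globally by $|\w|\,\|\w'\|_{L^{\infty}}\|\a\|^{2}_{L^{2}}$.

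At this point I would absorb the mixed term $\|\a\|\|d\a\|$ via $2ab\leq \epsilon a^{2}+\epsilon^{-1}b^{2}$ with $\epsilon=|\w|^{2}/2$, moving an $\|\a\|^{2}$ piece to the left-hand side. The residue has the shape $\bigl(C_{1}-C_{2}\|\w'\|_{L^{\infty}}\bigr)\|\a\|^{2}_{L^{2}}\leq A\|d\a\|_{L^{2}}^{2}$, where $C_{1},C_{2}$ depend only on $|\w|,c,n,k$, i.e.\ only on $g,n$. Since $\langle\De\a,\a\rangle_{L^{2}}=\|d\a\|_{L^{2}}^{2}$ for $0$-forms, this is the claimed inequality on $C_{0}^{\infty}(X)$. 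To pass to a general $\a\in\La^{0}_{(2)}(X)$ with $\De\a$ defined (so $d\a\in L^{2}$) I would multiply by a cutoff $\chi_{R}$ with $|d\chi_{R}|\leq 2/R$, apply the estimate to $\chi_{R}\a$, and let $R\to\infty$; the error terms $\int\a^{2}|d\chi_{R}|^{2}$ and $\langle\chi_{R}d\a,\a\,d\chi_{R}\rangle$ vanish in the limit because $\a\in L^{2}$.

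The main obstacle is really just careful bookkeeping rather than something conceptual: one must verify that $d\ast\w=0$ genuinely follows from parallelism so that the integration by parts is boundary-free, and that the combinatorial constant relating $|i_{\na f}\w|$ to $|df||\w|$ depends only on $n,k$. The geometric content is the standard one: a parallel form admitting an $L^{\infty}$-bounded primitive (up to a small perturbation $\w'$) forces a spectral gap of the Laplacian on $L^{2}$ functions, exactly as in Gromov's K\"ahler hyperbolicity argument.
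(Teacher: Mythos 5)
Your proposal is correct and follows essentially the same route as the paper: both proofs express $\a\w$ (equivalently $\int\a^{2}\,\w\wedge\ast\w$) via the decomposition $\w=d(i_{\na f}\w)+\w'$ with $|i_{\na f}\w|\lesssim\sqrt{A}$, integrate by parts once on the exact piece, and absorb the resulting $\sqrt{A}\,\|\a\|\,\|d\a\|$ and $\|\w'\|_{L^{\infty}}\|\a\|^{2}$ terms. The only differences are organizational --- you integrate by parts against $\ast\w$ using co-closedness where the paper pairs with $\b=\a\w$ and invokes $\|d^{\ast}\b\|^{2}\leq\langle\De\b,\b\rangle$, and your explicit cutoff argument to pass from $C^{\infty}_{0}$ to $\La^{0}_{(2)}$ is a welcome piece of care that the paper leaves implicit.
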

\begin{proof}
	Since $\w$ is a parallel differential form, then $\na|\w|^{2}=0$, i.e. $|\w|=constant$. Letting $u\in\La^{0}(X)$, we observe that:
$$|u\w|^{2}=\ast\big{(}(u\w)\wedge(u\ast\w)\big{)}=|u|^{2}|\w|^{2}=constant|u|^{2},$$	
	and
	$$\De(u\w)\wedge\ast(u\w)=((\De u)\w)\wedge(u\ast\w)=constant(\De u\wedge\ast u).$$
	These imply that
	$$\|u\|_{L^{2}(X)}=constant\|u\w\|_{L^{2}(X)},\ \langle\De(u\w),u\w\rangle_{L^{2}(X)}=constant\langle\De u,u\rangle_{L^{2}(X)}.$$
	Now, we write $\b=\w\wedge\a=d\eta+\tilde{\a}$, for $\eta= (-1)^{\tilde{C}}d_{C}f\wedge\a$ and $\tilde{\a}=d_{C}f\wedge d\a+\w'\wedge\a$ and observe that
	$$\|\eta\|_{L^{2}(X)}\lesssim\|d_{C}f\|_{L^{\infty}(X)}\|\a\|_{L^{2}(X)}\lesssim A\|\a\|_{L^{2}(X)},$$
	and
	$$\|d^{\ast}v\|_{L^{2}(X)}\leq\langle\De v,v\rangle_{L^{2}(X)}^{1/2},\ \forall v\in\Om^{\bullet}(X).$$
	Next, since
	\begin{equation}\nonumber
	\begin{split}
	\|\tilde{\a}\|_{L^{2}(X)}&\lesssim\|d\a\|_{L^{2}(X)}\|d_{C}f\|_{L^{\infty}(X)}+\|\w'\|_{L^{\infty}(X)}\|\a\|_{L^{2}(X)}\\
	&\lesssim A\langle\De\a,\a\rangle_{L^{2}(X)}^{1/2}+\|\w'\|_{L^{\infty}(X)}\|\a\|_{L^{2}(X)}\\
	&\lesssim A\langle\De\b,\b\rangle_{L^{2}(X)}^{1/2}+\|\w'\|_{L^{\infty}(X)}\|\a\|_{L^{2}(X)},\\
	\end{split}
	\end{equation}
	we have	
	\begin{equation}\nonumber
	\begin{split}
		\|\b\|^{2}_{L^{2}(X)}&\leq|\langle\b,d\eta\rangle_{L^{2}(X)}|+|\langle\b,\tilde{\a}\rangle_{L^{2}(X)}|\\
	&\leq|\langle d^{\ast}\b,\eta\rangle_{L^{2}(X)}|+|\langle\b,\tilde{\a}\rangle_{L^{2}(X)}|\\
	&\leq\|d^{\ast}\b\|_{L^{2}(X)}\|\eta\|_{L^{2}(X)}+\|\b\|_{L^{2}(X)}\|\tilde{\a}\|_{L^{2}(X)}\\
&\lesssim A\langle\De\b,\b\rangle_{L^{2}(X)}^{1/2}\|\a\|_{L^{2}(X)}+ A\langle\De\b,\b\rangle_{L^{2}(X)}^{1/2}\|\b\|_{L^{2}(X)}
	+\|\w'\|_{L^{\infty}(X)}\|\a\|_{L^{2}(X)}\|\b\|_{L^{2}(X)}\\
	&\lesssim A\langle\De\a,\a\rangle_{L^{2}(X)}^{1/2}\|\b\|_{L^{2}(X)}+\|\w'\|_{L^{\infty}(X)}\|\b\|^{2}_{L^{2}(X)}.\\ 
\end{split}
\end{equation}

	This yields the desired estimate
	$$
	(C_{1}-C_{2}\|\w'\|_{L^{\infty}(X)})\|\a\|^{2}_{L^{2}(X)}
	\leq A\langle\De\a,\a\rangle_{L^{2}(X)}.$$
where $C_{1}, C_{2}$ are positive constants depending only on $g,n$.
\end{proof}
Suppose that $\w'$ is small enough in $L^{\infty}$. Then following Proposition \ref{P10}, the first eigenvalue of the Laplace operator $\De$ is nonzero. In \cite{CY}, Cheng and Yau proved that the first eigenvalue of $\De$ is zero on a complete Ricci-flat manifold. We then have
\begin{proposition}\label{P6}
Let $(X,\w)$ be a complete $G_{2}$- or $Spin(7)$-manifold. Suppose that $\w=(-1)^{\tilde{C}}dd_{C}f+\w'$. Also assume that the function $f$ satisfies the convexity condition on $X$, i.e., for some $A,B\geq 0$, $|df|^{2}\leq A+Bf$. If $\w'$ is small enough in $L^{\infty}$, then $B>0$.
\end{proposition}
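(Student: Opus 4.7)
\textbf{Proof proposal for Proposition \ref{P6}.} I would argue by contradiction. Assume $B=0$, so that the convexity condition reduces to the uniform bound $|df|^{2}\leq A$ on all of $X$. The plan is to show that this bound, combined with the perturbation equation $\w=(-1)^{\tilde C}dd_{C}f+\w'$ and the smallness of $\|\w'\|_{L^{\infty}(X)}$, produces a positive lower bound on the bottom of the spectrum of $\De$ acting on $L^{2}$-functions, which contradicts the Cheng--Yau theorem on complete Ricci-flat manifolds.

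First, because $B=0$, the hypothesis of Proposition \ref{P10} is satisfied. Applying it with $\a\in\La^{0}_{(2)}(X)$ yields the inequality
\begin{equation*}
(C_{1}-C_{2}\|\w'\|_{L^{\infty}(X)})\|\a\|^{2}_{L^{2}(X)}\leq A\,\langle\De\a,\a\rangle_{L^{2}(X)},
\end{equation*}
where $C_{1},C_{2}$ depend only on $g$ and $n$. Choosing $\w'$ small enough in $L^{\infty}$ so that $\ep:=C_{1}-C_{2}\|\w'\|_{L^{\infty}(X)}>0$, this gives
\begin{equation*}
\langle\De\a,\a\rangle_{L^{2}(X)}\geq \frac{\ep}{A}\,\|\a\|^{2}_{L^{2}(X)}\qquad\text{for all }\a\in\La^{0}_{(2)}(X).
\end{equation*}
In particular, the bottom of the $L^{2}$-spectrum of $\De$ acting on functions on $X$ is bounded below by the strictly positive constant $\ep/A$.

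Next, I invoke the theorem of Cheng--Yau \cite{CY}: on any complete noncompact Riemannian manifold with nonnegative Ricci curvature, the bottom of the spectrum of the Laplacian on functions is zero. Since $G_{2}$- and $Spin(7)$-manifolds are Ricci-flat, this applies to $X$, producing a contradiction with the strictly positive lower bound obtained above. Hence $B=0$ is impossible, so $B>0$, as claimed.

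The main conceptual point, and the only nontrivial ingredient beyond what is already packaged in Proposition \ref{P10}, is the appeal to Cheng--Yau; I do not expect any technical obstacle, since the spectral lower bound is derived exactly in the form needed and the Ricci-flatness of holonomy $G_{2}$ or $Spin(7)$ is automatic. One should perhaps remark that $X$ must be noncompact here: in the compact case constants are $L^{2}$-harmonic functions, which forces $A\geq 0$ to be compatible with the estimate, but the statement of the proposition is aimed at the genuinely non-compact complete setting where Cheng--Yau bites.
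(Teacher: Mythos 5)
Your proposal is correct and matches the paper's own (implicit) argument exactly: the paper derives Proposition \ref{P6} by combining the spectral lower bound of Proposition \ref{P10} in the case $B=0$ with the Cheng--Yau result that the bottom of the spectrum of $\De$ on a complete Ricci-flat manifold vanishes. Nothing further is needed.
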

\subsection{Vanishing theorems}
The main result of this subsection is a vanishing theorem for $\mathcal{H}^{k}_{(2)}(X)$, under the additional condition that $\w'$ is small enough.

Recall that a function $f$ is an exhaustion function on $X$ if
$$X_{k}=:\{x\in X: f(x)<k \}\subset X,\ \forall k\in\mathbb{R}$$
has compact closure.
\begin{proof}[\textbf{Proof of Theorem \ref{T1}}]
Let $\chi:\mathbb{R}\rightarrow\mathbb{R}$ be smooth, $0\leq\chi\leq1$ with
	$$\chi(x)=\left\{
	\begin{aligned}
	1&   &x\geq1, \\
	0&   &x\leq0,\\
	\end{aligned}
	\right.$$
	and define, for $k\in\mathbb{N}^{+}$,
	$$\psi_{k}(x)=\chi(k-f(x)).$$
	Note that $supp \psi_{k}\subset X_{k}$ and $\psi_{k}\equiv1$ on $X_{k-1}$.
	
	Suppose $h\in\mathcal{H}_{(2)}^{p}(X)$. Then by Corollary \ref{C1}, $\w\wedge h\in\mathcal{H}_{(2)}^{k+p}(X)$ and so it implies that $\w\wedge h$ is co-closed. Let $\textbf{h}=(-1)^{\tilde{C}}d_{C}f\wedge h$. Since $\psi_{k}\cdot\textbf{h}$ has compact support, an integration by parts gives
	\begin{equation}\label{E4}
	(\w\wedge h, d(\psi_{k}\cdot\textbf{h}))=(d^{\ast} (\w\wedge h), \psi_{k}\cdot\textbf{h})=0.
	\end{equation}
	Since $\w=(-1)^{\tilde{C}}dd_{C}f+\w'$ and $dh=0$ on $X$, we have
	\begin{equation}\label{E5}
	\begin{split}
	d(\psi_{k}\cdot\textbf{h})&=-\chi'(k-f)\cdot df\wedge d_{C}f\wedge h+\psi_{k}\cdot(\w-\w')\wedge h,\\
	\end{split}
	\end{equation}
We now substitute (\ref{E5}) into (\ref{E4}) and consider the two terms coming from the right-hand side of (\ref{E5}) separately. For the first term, the Cauchy-Schwarz inequality and the fact that $\w$ is bounded in the $\langle,\rangle$ inner product imply
	\begin{equation}\label{E7}
	\begin{split}
	|(\w\wedge h, -\chi'\cdot df\wedge d_{C}f\wedge h)|&\lesssim\int_{X_{k}\backslash X_{k-1}}|df\wedge d_{C}f|\cdot|h|^{2}\\
	&\lesssim\int_{X_{k}\backslash X_{k-1}}|df|^{2}\cdot|h|^{2}\\
	&\lesssim\int_{X_{k}\backslash X_{k-1}}(A+Bf)|h|^{2}\\
	&\lesssim (A+Bk)\int_{X_{k}\backslash X_{k-1}}|h|^{2},\\
	\end{split}
	\end{equation}
	for constants independent of $k$ and $A,B$ as in Definition \ref{D1}. The third inequality follows from our hypothesis on $df$.
	
 We claim that the assumption that $h\in\mathcal{H}^{p}_{(2)}(X)$ implies that there exists a subsequence $\{k_{l}\}$ such that
	\begin{equation}\label{E8}
	k_{l}\int_{X_{l_{k}}\backslash X_{l_{k-1}}}|h|^{2}\rightarrow0\ as\ l\rightarrow\infty.
	\end{equation}
	Otherwise, for some $c>0$,
	\begin{equation*}
	\begin{split}
	\int_{X}|h|^{2}&=\sum_{k=1}^{\infty}\int_{X_{k}\backslash X_{k-1}}|h|^{2}\\
	&\geq c\sum_{k=1}^{\infty}\frac{1}{k}\\
	&=\infty,\\
	\end{split}
	\end{equation*}
	a contradiction.
	
	For the term coming from the second term on the right-hand side for (\ref{E5}), 
	\begin{equation}\label{E9}
	\lim_{k\rightarrow\infty}(\w\wedge h,\psi_{k}\cdot(\w-\w')\wedge h)=\|\w\wedge h\|^{2}-(\w\wedge h,\w'\wedge h).
	\end{equation} 
	Substituting (\ref{E7})--(\ref{E9}) into (\ref{E4}), it follows that 
	\begin{equation}\label{E3.8}
	\|\w\wedge h\|^{2}_{L^{2}(X)}=(\w\wedge h,\w'\wedge h)\leq \|\w\wedge h\|_{L^{2}(X)}\|\w'\wedge h\|_{L^{2}(K)}.
	\end{equation}
Therefore, we complete this proof.
\end{proof}
\begin{proof}[\textbf{Proof of Corollary \ref{C2}}]
If $X$ is $G_{2}$ or $Spin(7)$-manifold, following Lemma \ref{L2}, \ref{L3}, then for $k=0,1,2$,
$$\|\a\|^{2}\approx\|\a\wedge\w\|^{2}, \forall \a\in\Om^{k}(X).$$
Following Theorem \ref{T1}, for any $L^{2}$-harmonic $2$-form $\a$, we then have
$$\|\a\|_{L^{2}(X)}\lesssim \|\w'\|_{L^{\infty}(X)}\|\a\|_{L^{2}(X)}\leq C\varepsilon\|\a\|_{L^{2}(X)},$$
where $C$ is a positive constant depending only on $n$. We can choose $\varepsilon$ small enough to ensure that $C\varepsilon<1$. Hence $\a=0$.
\end{proof}
\begin{lemma}\label{L1}
	Let  $(X,\w)$ be a complete Riemannian manifold equipped with a non-zero  parallel differential $k$-form $\w$.  If $\w':=d\theta$ is a $d$(sublinear)  $k$-form, then for any $h\in\mathcal{H}^{p}_{(2)}(X)$,  we have
	\begin{equation*}
	\langle\w\wedge h,\w'\wedge h\rangle_{L^{2}(X)}=0.
	\end{equation*}
\end{lemma}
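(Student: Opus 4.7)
The plan is to move the exterior derivative from $\w'\wedge h$ onto $\w\wedge h$, exploiting that the former is exact and the latter is co-closed. Since $dh=0$ and $\w'=d\theta$ we have $\w'\wedge h=d(\theta\wedge h)$. Moreover, Corollary \ref{C1} gives that $\w\wedge h$ is harmonic; since $|\w|$ is constant ($\w$ is parallel) and $h\in L^{2}$, we have $\w\wedge h\in L^{2}$, and on the complete manifold $X$ harmonicity of an $L^{2}$-form implies both $d(\w\wedge h)=0$ and $d^{\ast}(\w\wedge h)=0$.

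I would then introduce a cutoff built from the distance function. Fix $x_{0}\in X$, set $\rho(x)=\operatorname{dist}_{g}(x_{0},x)$, and pick $\chi\in C^{\infty}(\mathbb{R},[0,1])$ with $\chi\equiv1$ on $(-\infty,1]$ and $\chi\equiv0$ on $[2,\infty)$; define $\chi_{R}(x):=\chi(\rho(x)/R)$. Each $\chi_{R}$ is compactly supported (by completeness), equals $1$ on $B_{R}(x_{0})$, and satisfies $|\na\chi_{R}|\leq C/R$ with $\operatorname{supp}(\na\chi_{R})\subset A_{R}:=B_{2R}(x_{0})\setminus B_{R}(x_{0})$. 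Using the identity $d^{\ast}(\chi\alpha)=\chi\,d^{\ast}\alpha-\iota_{\na\chi}\alpha$ and the compact support of $\chi_{R}\theta\wedge h$, an integration by parts yields
\begin{equation*}
\int_{X}\chi_{R}\langle\w\wedge h,\w'\wedge h\rangle=\int_{X}\langle d^{\ast}(\chi_{R}\,\w\wedge h),\theta\wedge h\rangle=-\int_{X}\langle\iota_{\na\chi_{R}}(\w\wedge h),\theta\wedge h\rangle,
\end{equation*}
where the last equality uses $d^{\ast}(\w\wedge h)=0$.

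Finally I would estimate the error and let $R\to\infty$. The $d$(sublinear) assumption gives $|\theta|\leq c(1+\rho)\leq c(1+2R)$ on $A_{R}$; combined with $|\na\chi_{R}|\leq C/R$ and $|\w|=\mathrm{const}$, this yields
\begin{equation*}
\Big|\int_{X}\chi_{R}\langle\w\wedge h,\w'\wedge h\rangle\Big|\lesssim\frac{1+2R}{R}\int_{A_{R}}|h|^{2}\lesssim\int_{A_{R}}|h|^{2}\longrightarrow 0
\end{equation*}
as $R\to\infty$, since $h\in L^{2}(X)$ forces its tail norm over $X\setminus B_{R}(x_{0})$ to vanish. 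Because $|\w\wedge h|\cdot|\w'\wedge h|\lesssim|h|^{2}\in L^{1}$ (using $|\w|=\mathrm{const}$ and $|\w'|\leq c$), dominated convergence identifies the limit of the left-hand side with $\langle\w\wedge h,\w'\wedge h\rangle_{L^{2}(X)}$, which must therefore vanish. The main technical delicacy is that the linear growth of $\theta$ and the $1/R$ decay of $|\na\chi_{R}|$ cancel only up to a bounded prefactor, not a vanishing one; it is the $L^{2}$-tail $\int_{A_{R}}|h|^{2}\to 0$ that ultimately kills the error, explaining why $d$(sublinear) is exactly the right hypothesis for this conclusion.
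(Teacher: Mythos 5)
Your proof is correct and follows the same strategy as the paper's: both pair the co-closed form $\w\wedge h$ against a compactly supported truncation of $\theta\wedge h$, integrate by parts, and use the sublinear growth of $\theta$ to control the resulting gradient term. The one genuine difference is the cutoff. The paper uses $f_{j}(x)=\eta(\rho(x_{0},x)-j)$, which transitions over an annulus of unit width and therefore has gradient of order $1$; the error term is then of size $(j+1)\int_{B_{j+1}\setminus B_{j}}|h|^{2}$, which tends to $0$ only along a subsequence, extracted from the convergence of $\sum_{j}\int_{B_{j+1}\setminus B_{j}}|h|^{2}$. Your cutoff $\chi(\rho/R)$ spreads the transition over an annulus of width $R$, so $|\na\chi_{R}|\lesssim 1/R$ exactly cancels the linear growth of $\theta$ and the error is bounded by the full $L^{2}$-tail $\int_{B_{2R}\setminus B_{R}}|h|^{2}$, which vanishes as $R\to\infty$ with no subsequence needed. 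That is a small but real streamlining; everything else --- the identity $\w'\wedge h=d(\theta\wedge h)$, co-closedness of $\w\wedge h$ via Corollary \ref{C1}, and dominated convergence for the main term --- matches the paper's argument.
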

\begin{proof}
	Let $\eta:\mathbb{R}\rightarrow\mathbb{R}$ be smooth, $0\leq\eta\leq1$,
	$$
	\eta(t)=\left\{
	\begin{aligned}
	1, &  & t\leq0 \\
	0,  &  & t\geq1
	\end{aligned}
	\right.
	$$
	and consider the compactly supported function
	$$f_{j}(x)=\eta(\rho(x_{0},x)-j),$$
	where $j$ is a positive integer.
	
	Let $h$ be a harmonic $p$-form in $L^{2}$. Observing that $d^{\ast}(\w\wedge h)=0$ since $\w\wedge h\in\mathcal{H}_{(2)}^{p+k}(X)$  and noticing that $f_{j}(\theta\wedge h)$ has compact support, one has
	\begin{equation}\label{E3}
	\begin{split}
	0&=(d^{\ast}(\w\wedge h),f_{j}(\theta\wedge h))\\
	&=(\w\wedge h,d(f_{j}\theta\wedge h))\\
	&=(\w\wedge h, f_{j}\w'\wedge h)+(\w\wedge h, df_{j}\wedge\theta\wedge h).\\
	\end{split}
	\end{equation}
	Since $0\leq f_{j}\leq 1$ and $\lim_{j\rightarrow\infty}f_{j}(x)(\w\wedge h)(x)=(\w\wedge h)(x)$, it follows from the dominated convergence theorem that
	\begin{equation}\label{E6}
	\lim_{j\rightarrow\infty}(\w\wedge h, f_{j}\w'\wedge h)\rangle=(\w\wedge h,\w'\wedge h).
	\end{equation}
	Following the idea in Theorem \ref{T1}, we  can also prove that there exists a subsequence $\{j_{i}\}_{i\geq1}$ such that
	\begin{equation}\label{E1}
	\lim_{i\rightarrow\infty}(j_{i}+1)\int_{B_{j_{i}+1}\backslash B_{j_{i}}}|h(x)|^{2}dx=0.
	\end{equation}
	Using  (\ref{E1}), one obtains
	\begin{equation}\label{E10}
	\lim_{i\rightarrow\infty}(\w\wedge h, df_{j}\wedge\theta\wedge h)=0
	\end{equation}
	It now follows from (\ref{E3}), (\ref{E6}) and (\ref{E10}) that $(\w\wedge h,\w'\wedge h)=0.$
\end{proof}
\begin{proof}[\textbf{Proof of Theorem \ref{T2}}]
The conclusion follows from Lemma \ref{L1} and Equation (\ref{E3.8}).
\end{proof}
\subsection{The $L^{2}$ estimates}
	\begin{proposition}\label{P3}
	Let $X$ be a complete Riemannian manifold, $\dim X=n$. Suppose that there is a function $f\in \La^{0}(X)$, $f\geq1$ such that 
	$$-\De f\geq C>0,\ |df|^{2}\leq A+Bf,\ B<C,$$
	where $A, B,C$ are positive constants. Then
	\begin{equation}\label{E45}
	m\int_{X}\frac{1}{f+M}|u|^{2}\leq \|du\|^{2},\ \forall u\in\La^{0}_{0}(X),
	\end{equation}
	where $M,m$ are positive constants depending on $A,B$. Furthermore, if $X$ is Ricci-flat, then
	\begin{equation*}
	m\int_{X}\frac{1}{f+M}|u|^{2}\leq
	\|du\|^{2}+\|d^{\ast}u\|^{2}, \ \forall u\in\La^{1}_{0}(X).
	\end{equation*} 
\end{proposition}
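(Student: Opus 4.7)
\emph{Approach.} The plan is to establish (\ref{E45}) via a weighted integration by parts with test function $g=u^{2}/(f+M)$. Since $u\in\La^{0}_{0}(X)$ and $f+M\geq 1$, $g$ is smooth and compactly supported, so the assumption $-\De f\geq C$ gives
\begin{equation*}
C\int_{X}\frac{u^{2}}{f+M}\,dV \leq \int_{X}(-\De f)g\,dV = \int_{X}\langle\na f,\na g\rangle\,dV.
\end{equation*}
Expanding $\na g=\tfrac{2u\na u}{f+M}-\tfrac{u^{2}\na f}{(f+M)^{2}}$ and regrouping yields
\begin{equation*}
C\int_{X}\frac{u^{2}}{f+M}\,dV + \int_{X}\frac{u^{2}|\na f|^{2}}{(f+M)^{2}}\,dV \leq 2\int_{X}\frac{u\langle\na f,\na u\rangle}{f+M}\,dV.
\end{equation*}

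\emph{Absorption step.} Choosing $M\geq A/B$, the gradient bound $|\na f|^{2}\leq A+Bf\leq B(f+M)$ gives the key estimate $|\na f|^{2}/(f+M)^{2}\leq B/(f+M)$. I would apply the pointwise Young inequality $2ab\leq 2a^{2}+\tfrac{1}{2}b^{2}$ with $a=|u||\na f|/(f+M)$ and $b=|\na u|$ to the right-hand side above, producing $2\int u^{2}|\na f|^{2}/(f+M)^{2}+\tfrac{1}{2}\|\na u\|^{2}$. Transposing the matching $|\na f|^{2}$-term and invoking the key estimate, one arrives at
\begin{equation*}
(C-B)\int_{X}\frac{u^{2}}{f+M}\,dV \leq \tfrac{1}{2}\|\na u\|^{2}_{L^{2}(X)}.
\end{equation*}
The hypothesis $B<C$ is precisely what makes the coefficient $C-B$ strictly positive, so (\ref{E45}) follows with $m=2(C-B)$ and $M=A/B$ (or any larger value).

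\emph{The $1$-form case.} For $u\in\La^{1}_{0}(X)$ with $X$ Ricci-flat, I would repeat the argument with $u^{2}$ replaced by $|u|^{2}=\langle u,u\rangle$ and test function $g=|u|^{2}/(f+M)$. The only change is that the cross term becomes $\langle u,\na_{\na f}u\rangle/(f+M)$, which Cauchy--Schwarz on the tensor pairing bounds pointwise by $|u||\na f||\na u|/(f+M)$; the absorption argument then applies verbatim to give $m\int_{X}|u|^{2}/(f+M)\,dV\leq\|\na u\|^{2}_{L^{2}(X)}$. To turn $\|\na u\|^{2}$ into the Hodge-norm expression appearing in the statement, I would apply the Bochner--Weitzenb\"ock formula for $1$-forms,
\begin{equation*}
\|\na u\|^{2}_{L^{2}(X)}+\int_{X}\langle\text{Ric}(u),u\rangle\,dV = \|du\|^{2}_{L^{2}(X)}+\|d^{\ast}u\|^{2}_{L^{2}(X)},
\end{equation*}
which under the Ricci-flat hypothesis collapses to $\|\na u\|^{2}=\|du\|^{2}+\|d^{\ast}u\|^{2}$, completing the proof.

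\emph{Main obstacle.} The only delicate step is the simultaneous control of the two ``error'' terms produced by differentiating the weight $1/(f+M)$: the $u^{2}|\na f|^{2}/(f+M)^{2}$ contribution on the left of the integration-by-parts identity, and the cross term $u\langle\na f,\na u\rangle/(f+M)$ on the right. Young's inequality transforms the latter into a combination whose $|\na f|^{2}$-piece must exceed the former, and this excess is precisely bounded by $B\int u^{2}/(f+M)$ thanks to the choice $M\geq A/B$. The condition $B<C$ then makes the balance favorable; without it, no useful lower bound on the left-hand side would survive.
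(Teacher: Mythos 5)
Your argument is correct and is, at its core, the same weighted Hardy-type estimate the paper uses: the paper expands $\|du-u\,d\la\|^{2}\geq0$ with the logarithmic weight $\la=-\varepsilon\log\tilde f$ (after first replacing $f$ by $\tilde f=tf+1$ so that $|d\tilde f|^{2}\leq B\tilde f$), which, once unpacked, is exactly your integration by parts against $u^{2}/(f+M)$ followed by Young's inequality; your device $M\geq A/B$ plays the role of the paper's rescaling $\tilde f=tf+1$. Two small points. First, be careful with the sign convention: in this paper $\De=d^{\ast}d$ (see the proof of Proposition \ref{P4}, where the hypothesis is realized as $-d^{\ast}df\geq C$), so Green's identity reads $\int_{X}(-\De f)g=-\int_{X}\langle\na f,\na g\rangle$, and your regrouped inequality should carry the $|\na f|^{2}$-term on the right with a plus sign rather than on the left; this is harmless because you estimate the cross term in absolute value anyway, and one still lands on $(C-B-\varepsilon B)\int_{X}|u|^{2}/(f+M)\leq\varepsilon^{-1}\|\na u\|^{2}$ for small $\varepsilon$, which is literally the paper's constant $\varepsilon(C-B)-\varepsilon^{2}B$. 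Second, for the $1$-form case the paper does not redo the integration by parts: it applies the scalar inequality to the function $|u|$ and then uses the Kato inequality $|\na|u||\leq|\na u|$ together with the Ricci-flat Weitzenb\"ock formula. Your direct route (testing against $|u|^{2}/(f+M)$ and bounding $\langle\na_{\na f}u,u\rangle$ by Cauchy--Schwarz) reaches the same bound $m\int_{X}|u|^{2}/(f+M)\leq\|\na u\|^{2}$ and has the mild advantage of avoiding the non-smoothness of $|u|$ on its zero set, at the cost of one extra tensorial computation; both are valid.
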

\begin{proof}
	If $\la$ is smooth function on $X$, we have an inequality
	$$\|du-ud\la\|^{2}=\|du\|^{2}+\|ud\la\|^{2}-(du^{2},d\la)\geq0.$$
	Thus
	\begin{equation}\label{E41}
	(u^{2},d^{\ast}d\la)\leq\|du\|^{2}+\|ud\la\|^{2}.
	\end{equation}
	Suppose now that $f$ dominates $df$. Replacing $f$ by $\tilde{f}=tf+1$, $t>0$ and small, we may assume\\
	(i) $\tilde{f}\geq1$, $x\in X$\\
	(ii) $|d\tilde{f}|^{2}\leq B\tilde{f}$, $x\in X$,\\
	where $B$ in (ii) above is the constant appearing in Definition \ref{D1}. Fix a $t$ such that (i) and (ii) hold. For notational convenience, we will continue to denote $\tilde{f}$ as just $f$, but unravel this abuse of notation at the end of the proof.
	
	For $\varepsilon>0$ to be determined, let $\la=-\varepsilon\log f$. Note that
	\begin{equation}\label{E4.4}
	\begin{split}
	d^{\ast}d\la&=-\frac{\varepsilon d^{\ast}df}{f}-\frac{\varepsilon\ast(\ast df\wedge df)}{f^{2}}\\
	&\geq\frac{\varepsilon C}{f}-\frac{\varepsilon|df|^{2}}{f^{2}} \\
	&\geq\frac{\varepsilon(C-B)}{f}.\\
	\end{split}
	\end{equation}
	Hence, (\ref{E4.4}) implies that 
	\begin{equation}\label{E42}
	(u^{2},d^{\ast}d\la)\geq \int_{X}\frac{\varepsilon(C-B)}{f}|u|^{2}.
	\end{equation}
	Note also that 
	\begin{equation}\label{E43}
	|d\la|^{2}=\frac{\varepsilon^{2}}{f^{2}}|df|^{2}\leq\varepsilon^{2}\frac{B}{f}.
	\end{equation}
	Substituting (\ref{E42})--(\ref{E43}) into (\ref{E41}), we obtain 
	\begin{equation}\label{E44}
	\int_{X}\frac{\varepsilon(C-B)-\varepsilon^{2}B}{f}|u|^{2}\leq \|du\|^{2}.
	\end{equation}
	As $C-B>0$, choose $\varepsilon$ so that $C-B-\varepsilon B=\kappa >0$. It follows from (\ref{E44}) that (\ref{E45})  holds with $\tilde{f}$ in place of $f$ when $M=0$ and $m=\kappa\varepsilon$. Recalling that $\tilde{f}=tf+1$, it follows that (\ref{E44}) holds for $f$ with $m=\frac{\kappa\varepsilon}{t}$ and $M=\frac{1}{t}$, which completes the proof.
	
	Suppose that $X$ is Ricci-flat. We consider the form $u\in\La^{1}_{0}(X)$, then the Weitzenb\"{o}ck formula gives
	\begin{equation*}
	\|du\|^{2}+\|d^{\ast}u\|^{2}=\|\na u\|^{2}.
	\end{equation*}
	Following the Kato inequality $|\na|u||\leq|\na u|$ and (\ref{E45}), we have
	\begin{equation*}
	m\int_{X}\frac{1}{f+M}|u|^{2}\leq \|\na|u|\|^{2}\leq \|\na u\|^{2}\leq
	\|du\|^{2}+\|d^{\ast}u\|^{2}.
	\end{equation*}
	We complete this proof.
\end{proof}

\begin{lemma}\label{L5}
	Let $(X,\w)$ be a  complete $G_{2}$- (or $Spin(7)$-) manifold. If $u\in\La^{2}(X)$, we denote $u=u_{1}+u_{2}$, where $u_{i}\in\La^{2}_{i}(X)$, then $\De u_{i}\in\La^{2}_{i}(X)$. Furthermore, we have identity $$\langle\De u, u\rangle=\langle \De u_{1}, u_{1}\rangle+\langle \De u_{2},u_{2}\rangle.$$
\end{lemma}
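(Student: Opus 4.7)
The plan is to reduce the lemma to Proposition \ref{P1}(ii) by exhibiting the orthogonal projections onto the irreducible summands of $\La^{2}(X)$ as polynomials in operators that manifestly commute with the Hodge Laplacian $\De$.

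First I would introduce the operator $T := \ast L_{\w}$ acting on $\La^{2}(X)$, where $\w = \phi$ in the $G_{2}$ case and $\w = \Phi$ in the $Spin(7)$ case. Using the characterizations of $\La^{2}_{7},\La^{2}_{14}$ (resp.\ $\La^{2}_{7},\La^{2}_{21}$) recalled in Sections \ref{S1}--\ref{S2}, the operator $T$ acts as a scalar on each irreducible summand, with eigenvalues $(+2,-1)$ for $G_{2}$ and $(+3,-1)$ for $Spin(7)$. Consequently the orthogonal projections $\pi_{i}$ onto the two summands are explicit linear polynomials in $T$; for instance in the $G_{2}$ case
$$\pi_{7} = \tfrac{1}{3}(T + \mathrm{id}), \qquad \pi_{14} = \tfrac{1}{3}(2\,\mathrm{id} - T),$$
and analogously for $Spin(7)$.

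Next I would verify that $T$ commutes with $\De$. On any oriented Riemannian manifold the Hodge star $\ast$ commutes with $\De$, and $L_{\w}$ commutes with $\De$ by Proposition \ref{P1}(ii) since $\w$ is parallel. Hence $T = \ast \circ L_{\w}$ commutes with $\De$, and so do the projections $\pi_{i}$. For any $u \in \La^{2}(X)$, writing $u = u_{1} + u_{2}$ with $u_{i} = \pi_{i} u$, this commutativity gives
$$\De u_{i} = \De \pi_{i} u = \pi_{i} \De u \in \La^{2}_{i},$$
which is the first assertion.

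For the pointwise identity, observe that the splitting $\La^{2} = \La^{2}_{1} \oplus \La^{2}_{2}$ is fibrewise orthogonal, as the two summands are inequivalent irreducible representations of $G_{2}$ (resp.\ $Spin(7)$). Therefore $\langle \De u_{1}, u_{2}\rangle = \langle \De u_{2}, u_{1}\rangle = 0$ pointwise, because $\De u_{i} \in \La^{2}_{i}$ is orthogonal to $\La^{2}_{j}$ for $j \neq i$. Expanding $\langle \De u, u\rangle = \langle \De u_{1} + \De u_{2},\, u_{1} + u_{2}\rangle$ and discarding the vanishing cross terms yields the stated identity. Once Proposition \ref{P1}(ii) is granted the argument is purely formal, so I do not anticipate any substantial obstacle; the only point worth double-checking is the computation of the two eigenvalues of $T$, which is a direct consequence of the defining relations $\ast(\a \wedge \phi) = 2\a$ on $\La^{2}_{7}$ versus $-\a$ on $\La^{2}_{14}$ (and the analogous relations in the $Spin(7)$ setting).
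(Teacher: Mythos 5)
Your proposal is correct and rests on exactly the same ingredients as the paper's proof: Proposition \ref{P1}(ii) (commutation of $\De$ with $L_{\w}$), the commutation of $\De$ with $\ast$, and the eigenvalue characterizations $\ast(u_{i}\wedge\w)=c_{i}u_{i}$ of the summands; the paper simply verifies directly that $\De u_{i}\wedge\w=c_{i}\ast\De u_{i}$ rather than packaging the argument through the projections $\pi_{i}$ as polynomials in $T=\ast L_{\w}$. The two arguments are essentially identical, and your handling of the cross terms via fibrewise orthogonality of the inequivalent irreducible summands correctly supplies the final identity, which the paper leaves implicit.
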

\begin{proof}
	Let $u_{i}\in\La^{2}_{i}(X)$, i.e., $u_{i}\wedge\w=c_{i}\ast u_{i}$, where $c_{i}$ is constant, See Subsection \ref{S1}, \ref{S2} . Following Proposition \ref{P1}, the Laplacian $\De=\{d,d^{\ast}\}$ commutes with $L_{\w}$. Thus 
	$$\De u_{i}\wedge\w=\De(u_{i}\wedge\w)=\De\ast c_{i}u_{i}=\ast c_{i}\De u_{i},$$ 
	i.e., $\De u_{i}\in \La^{2}_{i}(X)$. 
\end{proof}
\begin{proof}[\textbf{Proof of Theorem \ref{T3}}]
First consider the $k=0,1$ cases.\\	
Following Proposition \ref{P4}, the function $f$ on $X$ satisfies
$$-d^{\ast}df\geq C>0\ and\  |df|^{2}\leq A+Bf.$$
Noticing that Ricci curvatures on $G_{2}$- and $Spin(7)$-manifold are flat. If $B<C$, then by Proposition \ref{P3}
\begin{equation}
m\int_{X}\frac{1}{f+M}|u|^{2}\leq (\|du\|^{2}+\|d^{\ast}u\|^{2}),\ \forall u\in\La^{k}_{0}(X),
\end{equation}	
Now consider the $k=2$ case.\\
Over a complete $G_{2}$- (or $Spin(7)$-) manifold, $u\in\La^{2}(X)$ is decomposed into 
	$u=u_{1}+u_{2}$, where $u_{1}\in\La^{2}_{7}(X)$, $u_{2}\in\La^{2}_{14}(X)$ or $u_{2}\in\La^{2}_{21}(X)$. Moreover, we have identities $\ast(u_{i}\wedge\w)=c_{i}\w$, where $c_{1}, c_{2}$ are constants.
	
 Suppose now that $f$ dominates $df$. Replace $f$ by $\tilde{f}=tf+1$, $t>0$. Fix a $t$ such that the conditions (i) and (ii) in the proof of the Proposition \ref{P3} hold. For notational convenience, we will continue to denote $\tilde{f}$ as just $f$.
 
	We denote $\textbf{u}_{i}=u_{i}f^{-\frac{1}{2}}$.  Since $\textbf{u}_{i}$ has compact support,  an integration by parts gives
	\begin{equation}\label{E20}
	(\textbf{u}_{i}\wedge\w,d(\textbf{u}_{i}\wedge d_{C}f))=(d^{\ast}(\textbf{u}_{i}\wedge\w),\textbf{u}_{i}\wedge d_{C}f).
	\end{equation} 
	Since $\w=(-1)^{\tilde{C}}dd_{C}f+\w'$, we get
	\begin{equation}\label{E21}
	d(\textbf{u}_{i}\wedge d_{C}f))=d\textbf{u}_{i}\wedge d_{C}f+(-1)^{\tilde{C}}\textbf{u}_{i}\wedge(\w-\w').
	\end{equation}
	Note that $d^{\ast}(\mathbf{u}_{i}\wedge\w)=-c_{i}\ast d\mathbf{u}_{i}$. We now substitute (\ref{E21}) into (\ref{E20}), it gives that
	\begin{equation}\label{E22}
	\begin{split}
	(-1)^{\tilde{C}}(\textbf{u}_{i}\wedge\w,\textbf{u}_{i}\wedge\w)&=-(\textbf{u}_{i}\wedge\w,d\textbf{u}_{i}\wedge d_{C}f)+(d^{\ast}(\textbf{u}_{i}\wedge\w),\textbf{u}_{i}\wedge d_{C}f)+(-1)^{\tilde{C}}(\textbf{u}_{i}\wedge\w,\textbf{u}_{i}\wedge\w').\\
	&=-(c_{i}\ast\textbf{u}_{i},d\textbf{u}_{i}\wedge d_{C}f)-c_{i}(\ast d\textbf{u}_{i},\textbf{u}_{i}\wedge d_{C}f)+(-1)^{\tilde{C}}(\textbf{u}_{i}\wedge\w,\textbf{u}_{i}\wedge\w')\\
	&=I_{1}+I_{2}\\
	\end{split}
	\end{equation}
	Note that $$|d_{C}f|=|df\wedge\ast\w|\lesssim|df|.$$ 
	For the first and second terms coming from on the right-hand side of (\ref{E22}), the Cauchy-Schwarz inequality implies
	\begin{equation}\label{E25}
	\begin{split}
	|I_{1}|&=|c_{i}(\ast d\textbf{u}_{i},\textbf{u}_{i}\wedge d_{C}f)+(c_{i}\ast\textbf{u}_{i},d\textbf{u}_{i}\wedge d_{C}f)|\\
	&\lesssim |\int_{X}\textbf{u}_{i}\wedge d\textbf{u}_{i}\wedge d_{C}f|\\
	&=|\int_{X}\textbf{u}_{i}\wedge(f^{-\frac{1}{2}}du_{i}-\frac{1}{2}f^{-\frac{3}{2}}u_{i}\wedge df)\wedge d_{C}f|\\
	&\lesssim\int_{X}f^{-1}|u_{i}||du_{i}||df|+\int_{X}f^{-2}|u_{i}|^{2}|df|^{2}\\
	&\lesssim\int_{X}|du_{i}|^{2}+\int_{X}f^{-2}|u_{i}|^{2}|df|^{2}\\
	&\lesssim\int_{X}|du_{i}|^{2}+B\int_{X}f^{-1}|u_{i}|^{2},\\
	\end{split}
	\end{equation}
	for constants independent of $A,B$ as in Definition \ref{D1}.\\
For the third term coming from on the right-hand side of (\ref{E22}), we get
	\begin{equation}
	|I_{2}|=|(\textbf{u}_{i}\wedge\w,\textbf{u}_{i}\wedge\w')|\lesssim\|\w'\|_{L^{\infty}(X)}\int_{X}f^{-1}|u_{i}|^{2}.
	\end{equation}
	For the term coming from on the left-hand side of (\ref{E22}), we have
	\begin{equation}\label{E26}
	(\textbf{u}_{i}\wedge\w,\textbf{u}_{i}\wedge\w)=c_{i}^{2}\int_{X}\frac{|u_{i}|^{2}}{f}.
	\end{equation}
	Substituting (\ref{E25})--(\ref{E26}) into (\ref{E22}), it follows that
	\begin{equation}\label{E27}
	\int_{X}\frac{|u_{i}|^{2}}{f}\leq C\|du_{i}\|^{2}+C(B+\|\w'\|_{L^{\infty}(X)})\int_{X}\frac{|u_{i}|^{2}}{f}
	\end{equation}
	where $C$ is a positive constant independent of $A,B$. Provided that $C(B+\|\w'\|_{L^{\infty}(X)})\leq\frac{1}{2}$, rearrangement gives 
	\begin{equation*}\label{E28}
	\begin{split}
	\int_{X}\frac{|u|^{2}}{f}&\leq (\int_{X}\frac{|u_{1}|^{2}}{f}+\int_{X}\frac{|u_{2}|^{2}}{f})\\
	&\leq 2C(\|du_{1}\|^{2}+\|du_{2}\|^{2})\\
	&\leq 2C(\|du\|^{2}+\|d^{\ast}u\|^{2})\\
	\end{split}
	\end{equation*}
	where we use the Lemma \ref{L5}.
\end{proof}

The inequalities (\ref{E60}) on differential forms have an important application in the following problem:\\
The $L^{2}$-existence theorem and $L^{2}$-estimate of the Cartan-De Rham equation 
$$dv=u$$
where $u\in L^{2}(\La^{k}(X))$ is a given $(k+1)$-form satisfying
$$du=0.$$
\begin{proposition}
	Assume the hypotheses of Theorem \ref{T3}. Suppose that $f$ dominates $df$ and that the constant $B$ in Definition \ref{D1} is small enough. Then for any $u\in\La^{k}(X)$ with $k=0,1,2$ such that (i) $du=0$ and (ii) $fu\in\La^{k}_{(2)}(X)$ there exists a solution to $dv=u$ which satisfies the estimate
	$$\|v\|^{2}\leq C\int_{X}|u|^{2}\cdot(f+M),$$
	where the positive constant $C$ depends only on $A,B$.
\end{proposition}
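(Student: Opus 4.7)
My plan is to run the standard Hörmander-style $L^{2}$-existence argument, driven by the weighted Poincaré inequality (\ref{E60}) from Theorem~\ref{T3}. The case $k=0$ is degenerate: a closed function is locally constant on a connected complete manifold, and $fu\in L^{2}$ with $f\geq 1$ on the non-compact $X$ forces $u\equiv 0$, so $v=0$ works. Henceforth I take $k\in\{1,2\}$ and seek $v\in\La^{k-1}_{(2)}(X)$ with $dv=u$.

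First, I will record that Theorem~\ref{T3} supplies both the weighted estimate
$$m\int_{X}\frac{|w|^{2}}{f+M}\leq \|dw\|^{2}+\|d^{\ast}w\|^{2},\qquad \forall\, w\in\La^{k}_{0}(X),$$
and the vanishing $\mathcal{H}^{k}_{(2)}(X)=0$. Combined with the $L^{2}$-Hodge decomposition recalled in Section~2, this forces
$$L^{2}(\La^{k}(X))=\ov{d(\La^{k-1}_{0}(X))}\oplus\ov{d^{\ast}(\La^{k+1}_{0}(X))}.$$
Since $du=0$ in the distributional sense, $u$ is $L^{2}$-orthogonal to $d^{\ast}(\La^{k+1}_{0}(X))$; hence $u\in\ov{d(\La^{k-1}_{0}(X))}$, which is the qualitative existence statement.

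To extract a quantitative solution, I will define a linear functional $L$ on the subspace $d^{\ast}(\La^{k}_{0}(X))\subset L^{2}(\La^{k-1})$ by $L(d^{\ast}\psi):=\langle u,\psi\rangle_{L^{2}}$ for $\psi\in\La^{k}_{0}(X)$. Decomposing $\psi=\psi_{1}+\psi_{2}$ with $\psi_{1}\in\ov{d(\La^{k-1}_{0})}$ and $\psi_{2}\in\ov{d^{\ast}(\La^{k+1}_{0})}$, the orthogonality of $u$ to $\psi_{2}$ yields $\langle u,\psi\rangle=\langle u,\psi_{1}\rangle$; since $(d^{\ast})^{2}=0$, $\psi_{2}$ is weakly coclosed, so $d^{\ast}\psi=d^{\ast}\psi_{1}$, and this makes $L$ independent of the representative $\psi$ of $d^{\ast}\psi$. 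Because $\psi_{1}$ is weakly closed, applying Cauchy--Schwarz with weight $(f+M)^{\pm 1/2}$ and then the weighted estimate to $\psi_{1}$ gives
$$|L(d^{\ast}\psi)|^{2}\leq\Bigl(\int_{X}|u|^{2}(f+M)\Bigr)\cdot\frac{1}{m}\|d^{\ast}\psi\|_{L^{2}}^{2}.$$
The Hahn--Banach theorem and the Riesz representation theorem then produce $v\in L^{2}(\La^{k-1}(X))$ satisfying $\langle v,d^{\ast}\psi\rangle=\langle u,\psi\rangle$ for every $\psi\in\La^{k}_{0}(X)$, which is precisely the weak equation $dv=u$, together with $\|v\|_{L^{2}}^{2}\leq\frac{1}{m}\int_{X}|u|^{2}(f+M)$. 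This is the desired bound with $C=1/m$.

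The main obstacle I anticipate is the density step: (\ref{E60}) is formulated for smooth compactly supported forms, whereas the component $\psi_{1}\in\ov{d(\La^{k-1}_{0})}$ is merely an $L^{2}$ element that is weakly closed. I would handle this by approximating $\psi_{1}$ with cutoffs built from the exhaustion function $\la$, in the spirit of the proofs of Theorems~\ref{T1}--\ref{T3}, controlling the commutator errors via the convexity assumption $|df|^{2}\leq A+Bf$ (which bounds the terms where the cutoff derivative falls on $\psi_{1}$ on annuli $X_{k}\setminus X_{k-1}$), and passing to the limit to promote (\ref{E60}) to the weakly-closed subspace. Once this approximation is in place, the well-definedness of $L$, the Hahn--Banach/Riesz step, and the weak verification of $dv=u$ are routine.
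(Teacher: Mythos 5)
Your proposal is correct and follows essentially the same route as the paper: McNeal's duality argument, bounding the functional $d^{\ast}\psi\mapsto(u,\psi)$ via Cauchy--Schwarz with weight $(f+M)^{\pm 1/2}$ and the weighted estimate (\ref{E60}), then Hahn--Banach and Riesz representation, with the same constant $C=1/m$. The paper merely orders the reduction differently (it first proves the bound for smooth compactly supported \emph{closed} $\beta$ and then extends by orthogonality and Gaffney-type density of $\La^{k}_{0}(X)$ in $\mathrm{Dom}(d^{\ast})$, rather than decomposing the test form $\psi$ up front), and it does not separate out the degenerate $k=0$ case; the density issue you flag as your ``main obstacle'' is exactly the step the paper disposes of with that density statement.
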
  
\begin{proof}
	Note that $|u|^{2}\leq f|u|^{2}\leq f^{2}|u|^{2}$ since $f\geq1$. Hence 
	$$\int_{X}|u|^{2}\leq\int_{X}f|u|^{2}\leq\int_{X}f^{2}|u|^{2}.$$
	Our proof here use McNeal's argument in \cite{McN1} for the $\bar{\pa}$-equation. Let $N=\{\a\in\La^{k}_{(2)}(X): d\a=0 \}$ and $S=\{d^{\ast}\b :\b\in\La^{k}_{0}\cap N  \}$. On $S$ consider the linear functional 
	$$d^{\ast}\b \rightarrow (\b,u).$$
	Using (\ref{E60}), we obtain
	\begin{equation}\label{E61}
	\begin{split}
	|(\b,u)|&=\big{|}(\frac{1}{\sqrt{f+M}}\b,\sqrt{f+M}u)\big{|}\\
	&\leq\big{(}\int_{X}\frac{1}{f+M}|\b|^{2}\big{)}^{\frac{1}{2}}\cdot\big{(}\int_{X}(f+M)|u|^{2}\big{)}^{\frac{1}{2}}\\
	&\lesssim\|d^{\ast}\b\|\big{(}\int_{X}(f+M)|u|^{2}\big{)}^{\frac{1}{2}}.
	\end{split}
	\end{equation}
	Thus the functional is bounded on $S$. However we also have $(\b,u)=0$ if $\b\in S^{\bot}$ since $du=0$, so (\ref{E61}) actually holds for all $\b\in\La^{k}_{0}(X)$. Since $\La^{k}_{0}(X)$ is dense in $$Dom(d^{\ast}):=\{u\in\La^{k}_{(2)}(X) :d^{\ast}u\in\La^{k-1}_{(2)}(X) \}$$ in the norm $\|u\|^{2}+\|d^{\ast}u\|^{2}$,  (\ref{E61}) holds for all $\b\in Dom(d^{\ast})$. The Hahn-Banach theorem extends the functional to all of $\La^{k}_{(2)}(X)$ and then the Riesz representation theorem gives a $v\in\La^{k-1}_{(2)}(X)$ such that
	$$(d^{\ast}\b,v)=(\b,u), \forall\b\in Dom(d^{\ast}).$$
	This is equivalent to $dv=u$, and 
	$$\|v\|\lesssim\big{(}\int_{X}|u|^{2}\cdot(f+M)\big{)}^{\frac{1}{2}},$$
	which is the claimed norm estimate.
\end{proof}

\section*{Acknowledgements}
We would like to thank the anonymous referee for  careful reading of my manuscript and helpful comments. This work is supported by Nature Science Foundation of China No. 11801539 and Postdoctoral Science Foundation of China No. 2017M621998, No. 2018T110616.

\bigskip
\footnotesize

\end{document}